\def\Rset{\mathbb{R}}
\def\Cset{\mathbb{C}}
\def\Kset{\mathbb{K}}
\newtheorem{thm}{Theorem}[section]
\newtheorem{prop}[thm]{Proposition}
\newtheorem{cor}[thm]{Corollary}
\newtheorem*{thmA}{Theorem A}
\newtheorem*{thmB}{Theorem B}
\newtheorem*{thmC}{Theorem C}
\theoremstyle{definition}
\newdefinition{defn}[thm]{Definition}
\newdefinition{rem}[thm]{Remark}
\theoremstyle{remark}
\newtheorem{case}{Case}
\numberwithin{equation}{section}
\journal{}
\begin{document}

\begin{frontmatter}

\title{Relationships between different types of initial conditions for simultaneous root finding methods}
\author{Petko D. Proinov}
\ead{proinov@uni-plovdiv.bg}
\address{Faculty of Mathematics and Informatics, University of Plovdiv, Plovdiv 4000, Bulgaria}

\begin{abstract}
The construction of initial conditions of an iterative method is one of the most important problems in solving nonlinear equations. 
In this paper, we obtain relationships between different types of initial conditions that guarantee the convergence of iterative methods for simultaneous finding all zeros of a polynomial. In particular, we show that any local convergence theorem for a simultaneous method can be converted into a convergence theorem with computationally verifiable initial conditions which is of practical importance. 
Thus, we propose a new approach for obtaining semilocal convergence results for simultaneous methods via local convergence results. 
\end{abstract}

\begin{keyword}
Iterative methods \sep Simultaneous methods \sep Initial conditions \sep Polynomial zeros 
\sep Local convergence \sep Semilocal convergence   
\MSC 65H04 \sep 12Y05 \sep 26C10
\end{keyword}

\end{frontmatter}

%%%%%%%%%%%%%%%%%%%%%%%%%%%%%%%%%%%%%%%%%%%%%%%%%%%%%
%
%			          Introduction
%
%%%%%%%%%%%%%%%%%%%%%%%%%%%%%%%%%%%%%%%%%%%%%%%%%%%%%

%Section 1
\section{Introduction and preliminaries}
\label{sec:Introduction-preliminaries}

Throughout this paper ${(\Kset,|\cdot|)}$ denotes an algebraically closed normed field, 
$\Kset[z]$ denotes the ring of polynomials over $\Kset$, and
the vector space $\Kset^n$ is equipped with the $p$-norm
${\|x\|_p = \left( \sum_{i = 1}^n |x_i|^p \right)^{1/p}}$
for some ${1 \le p \le \infty}$.

Let ${f \in \Kset[z]}$ be a polynomial of degree ${n \ge 2}$. 
We consider the zeros of $f$ as a vector in $\Kset^n$.
More precisely, a vector ${\xi \in \Kset^n}$ is said to be a \emph{root-vector} of $f$ if
\(
{f(z) = a_0 \prod _{i = 1} ^ n (z - \xi_i)}
\)
for all ${z \in \Kset}$, where ${a_0 \in \Kset}$.
Without doubt the most famous iterative method for simultaneously finding all the zeros of a polynomial $f$ is the Weierstrass method, 
which is defined by 
\begin{equation}  \label{eq:Weierstrass-iteration}
x^{k + 1}  = x^k - W_f(x^k ), \qquad k = 0,1,2,\ldots,
\end{equation}
where the Weierstrass correction ${W_f \colon \mathcal{D} \subset \Kset^n \to \Kset^n}$ is defined by
\begin{equation} \label{eq:Weierstrass-correction}
W_f(x) = (W_1(x),\ldots,W_n(x)) \quad\text{with}\quad
W_i(x) = \frac{f(x_i)}{a_0 \prod_{j \ne i} (x_i  - x_j)}
\quad (i = 1,\ldots,n),
\end{equation}
where $a_0$ is the leading coefficient of $f$ and $\mathcal{D}$ is the set of all vectors in $\Kset^n$ with distinct components.

Let us consider three classical convergence theorems for the Weierstrass method. 
In these theorems, we assume that $f$ is a complex polynomial of degree ${n \ge 2}$ which has only simple zeros, 
and that ${\xi \in \Cset^n}$ is a root-vector of $f$. Throughout the paper we use the function 
${\delta \colon \Kset^n \to \Rset_+}$ defined by
\(
	{\delta(x) = \min_{i \ne j} |x_i - x _j|}
\)
and the function ${d \colon \Kset^n \to \Rset^n}$ defined by
\begin{equation} \label{eq:d}
d(x) = (d_1(x),\ldots,d_n(x)), \quad\text{where}\quad d_i(x) = \min_{j \ne i} |x_i  - x_j| .
\end{equation}
		
%Theorem A
\begin{thmA}[Dochev \cite{Doc62b}] \label{thm:Dochev-1962}
If ${x^0 \in \Cset^n}$ is an initial guess such that
\begin{equation} \label{eq:Dochev-initial-conditions}
	\|x^0 - \xi\|_\infty < \frac{\sqrt[n-1]{2} - 1}{2 \sqrt[n-1]{2} - 1} \, \delta(\xi),
\end{equation}
then the Weierstrass iteration \eqref{eq:Weierstrass-iteration} %%is well defined and 
converges quadratically to $\xi$.
\end{thmA}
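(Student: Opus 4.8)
The plan is to derive an exact recursion for the error and then extract both a contraction estimate and the quadratic rate from a single multiplicative bound. First I would use the factorization $f(x_i) = a_0 \prod_{j=1}^n (x_i - \xi_j)$ to rewrite the correction as
\[
W_i(x) = (x_i - \xi_i)\prod_{j \neq i}\frac{x_i - \xi_j}{x_i - x_j},
\]
and hence obtain, for the iterate $x^{k+1} = x^k - W_f(x^k)$, the error recursion
\[
x_i^{k+1} - \xi_i = (x_i^k - \xi_i)\left(1 - \prod_{j \neq i}\left(1 + \frac{x_j^k - \xi_j}{x_i^k - x_j^k}\right)\right).
\]
Writing the numerator as $x_i - \xi_j = (x_i - x_j) + (x_j - \xi_j)$ is what produces the second factor, and the explicit factor $x_i^k - \xi_i$ out front already reveals the quadratic character, since the bracket vanishes to first order in the errors.

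Next I would bound the ingredients. Setting $\varepsilon_k = \|x^k - \xi\|_\infty$, the triangle inequality gives $|x_i^k - x_j^k| \ge \delta(\xi) - 2\varepsilon_k$, so with $u_k = \varepsilon_k/(\delta(\xi) - 2\varepsilon_k)$ each ratio in the product is at most $u_k$ in modulus. Comparing the expansion of $\prod_{j \neq i}(1 + a_j) - 1$ term by term with that of $\prod_{j\neq i}(1 + |a_j|) - 1$ yields the clean estimate
\[
\left|1 - \prod_{j \neq i}\left(1 + \frac{x_j^k - \xi_j}{x_i^k - x_j^k}\right)\right| \le (1 + u_k)^{n-1} - 1,
\]
whence $\varepsilon_{k+1} \le \bigl((1+u_k)^{n-1} - 1\bigr)\,\varepsilon_k$. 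The contraction threshold then falls out by algebra: this factor is below $1$ exactly when $u_k < \beta := \sqrt[n-1]{2}-1$, and solving $u_k < \beta$ for $\varepsilon_k$ reproduces precisely the bound $\varepsilon_k < c\,\delta(\xi)$ of \eqref{eq:Dochev-initial-conditions}, where $c = \beta/(1+2\beta) = \frac{\sqrt[n-1]{2}-1}{2\sqrt[n-1]{2}-1}$.

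Under the hypothesis $\varepsilon_0$ satisfies this, so the contraction factor is below $1$. I would then argue by induction that $(\varepsilon_k)$ is strictly decreasing, that each $x^k$ stays in $\mathcal{D}$ (the lower bound $\delta(\xi) - 2\varepsilon_k$ remains positive because $c < 1/2$), and that $u_k$ stays below $\beta$ for every $k$, which gives convergence to $\xi$. To upgrade this linear contraction to the quadratic rate I would exploit convexity of $g(u) = (1+u)^{n-1}-1$: since $g(0)=0$, the ratio $g(u)/u$ is nondecreasing, so $g(u) \le u/\beta$ on $[0,\beta]$. Combined with $u_k < \varepsilon_k/\bigl(\delta(\xi)(1-2c)\bigr)$, this yields $\varepsilon_{k+1} \le C\,\varepsilon_k^2$ with an explicit constant $C$, i.e.\ quadratic convergence.

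The main obstacle is making the multiplicative bound of the second paragraph sharp enough that the contraction threshold matches \eqref{eq:Dochev-initial-conditions} \emph{exactly} rather than some weaker constant; the delicate point is that replacing $\prod(1+a_j)-1$ by $(1+u)^{n-1}-1$ must lose nothing at the boundary, so that the condition $u_k<\beta$ inverts to the stated coefficient $c$. Once that estimate is in place, the monotonicity, well-definedness, and quadratic-rate arguments are all routine consequences of the convexity of $g$.
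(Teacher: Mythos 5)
Your proof is correct: the exact error recursion $x_i^{k+1}-\xi_i=(x_i^k-\xi_i)\bigl(1-\prod_{j\ne i}(1+\tfrac{x_j^k-\xi_j}{x_i^k-x_j^k})\bigr)$, the term-by-term bound $\varepsilon_{k+1}\le\bigl((1+u_k)^{n-1}-1\bigr)\varepsilon_k$, the inversion of $u_k<\sqrt[n-1]{2}-1$ to exactly the constant in \eqref{eq:Dochev-initial-conditions} (since $\beta/(1+2\beta)=(\sqrt[n-1]{2}-1)/(2\sqrt[n-1]{2}-1)$), and the convexity bound $g(u)\le u/\beta$ giving $\varepsilon_{k+1}\le C\varepsilon_k^2$ all check out, and this is the standard direct argument for Dochev's theorem. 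Note that the paper states Theorem~A without proof, citing \cite{Doc62b}, so there is no internal proof to compare against; the only step worth making fully explicit in your induction is that the strict decrease of $\varepsilon_k$ forces $u_k\le u_0<\beta$, so the contraction factor is uniformly bounded by $(1+u_0)^{n-1}-1<1$, which is what actually yields $\varepsilon_k\to 0$ rather than mere monotonicity.
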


%Theorem B
\begin{thmB}[Wang and Zhao \cite{WZ91}] \label{thm:Wang-Zhao-1991}
If ${x^0 \in \Cset^n}$ is an initial guess such that
\begin{equation} \label{eq:Wang-Zhao-initial-conditions}
	\|x^0 - \xi\|_\infty < \frac{\sqrt[n-1]{2} - 1}{4 \sqrt[n-1]{2} - 3} \, \delta(x^0),
\end{equation}
then the Weierstrass iteration \eqref{eq:Weierstrass-iteration}  
converges to $\xi$.
\end{thmB}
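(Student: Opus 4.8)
The plan is to deduce Theorem B directly from Theorem A, in keeping with the paper's philosophy of converting one type of initial condition into another. The two hypotheses differ only in the scaling factor on the right-hand side: Dochev's condition is normalized by $\delta(\xi)$, the separation of the \emph{zeros}, whereas Wang and Zhao normalize by $\delta(x^0)$, the separation of the \emph{initial approximations}. Since $\delta(x^0)$ is the quantity we are handed, the whole proof reduces to controlling $\delta(\xi)$ from below in terms of $\delta(x^0)$.

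First I would set $\varepsilon = \|x^0 - \xi\|_\infty$ and establish the elementary separation bound. For any pair $i \ne j$, the triangle inequality gives
\begin{equation*}
|\xi_i - \xi_j| \ge |x_i^0 - x_j^0| - |x_i^0 - \xi_i| - |x_j^0 - \xi_j| \ge \delta(x^0) - 2\varepsilon,
\end{equation*}
and minimizing the left-hand side over $i \ne j$ yields $\delta(\xi) \ge \delta(x^0) - 2\varepsilon$.

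Second, I would do the algebraic bookkeeping that makes the two constants match. Writing $t = \sqrt[n-1]{2}$, so that Dochev's constant is $c_A = (t-1)/(2t-1)$ and the Wang--Zhao constant is $c_B = (t-1)/(4t-3)$, a short computation gives the key identity
\begin{equation*}
\frac{c_A}{1 + 2 c_A} = c_B .
\end{equation*}
Assuming now the Wang--Zhao hypothesis $\varepsilon < c_B \, \delta(x^0)$, I would rewrite it as $(1 + 2c_A)\,\varepsilon < c_A\,\delta(x^0)$, i.e. $\varepsilon < c_A\bigl(\delta(x^0) - 2\varepsilon\bigr)$, and then combine this with the bound of the previous paragraph and with $c_B < 1/2$ (which forces $\delta(x^0) - 2\varepsilon > 0$) to obtain
\begin{equation*}
\varepsilon < c_A\bigl(\delta(x^0) - 2\varepsilon\bigr) \le c_A\, \delta(\xi).
\end{equation*}
This is precisely Dochev's initial condition \eqref{eq:Dochev-initial-conditions}, so Theorem A applies and delivers convergence of the Weierstrass iteration to $\xi$ (in fact quadratic convergence, which is stronger than the conclusion claimed here).

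The routine parts are the triangle inequality and the constant identity; neither presents real difficulty. The only step demanding a little care is keeping every inequality strict throughout and confirming $c_B < 1/2$ for all $n \ge 2$, so that $\delta(x^0) - 2\varepsilon$ stays positive and the final chain is valid. I expect the main ``obstacle'' to be conceptual rather than technical: recognizing that the Wang--Zhao constant is not arbitrary but is exactly the image of Dochev's constant under the map $c \mapsto c/(1+2c)$ induced by the separation bound $\delta(\xi) \ge \delta(x^0) - 2\varepsilon$---which is the very phenomenon the paper sets out to make systematic.
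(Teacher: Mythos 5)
Your proof is correct and follows essentially the paper's own route: the paper obtains Theorem~B in Remark~\ref{rem:AB} by applying Theorem~\ref{thm:local-local} with $R = (\sqrt[n-1]{2}-1)/(2\sqrt[n-1]{2}-1)$, $p=\infty$, $b=2$ (in the $\delta$-form allowed by Remark~\ref{rem:FIC}), and your identity $c_B = c_A/(1+2c_A)$ together with the separation bound $\delta(\xi) \ge \delta(x^0) - 2\varepsilon$ is exactly the specialization of that theorem's hypothesis map $R \mapsto R/(1+bR)$ to this case. The only difference is that you prove the separation estimate directly by the triangle inequality rather than citing the general componentwise bound (Proposition~5.2 of \cite{Pro15}) underlying Proposition~\ref{prop:inequality}, which is immaterial.
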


%Theorem C
\begin{thmC}[Petkovi\'c, Carstensen and Trajkovi\'c \cite{PCT95}] \label{thm:Petkovic-Carstensen-Trajkovic-1995}
If ${x^0 \in \Cset^n}$ is an initial guess with distinct components such that
\begin{equation} \label{eq:PCT-initial-conditions}
	\|W_f(x^0)\|_\infty < \frac{\delta(x^0)}{5 n} \, ,
\end{equation}
then the Weierstrass iteration \eqref{eq:Weierstrass-iteration} %%%is well defined and 
converges to a root-vector of $f$.
\end{thmC}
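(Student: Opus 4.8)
The plan is to deduce Theorem C from the local result Theorem B by converting the computationally verifiable hypothesis $\|W_f(x^0)\|_\infty < \delta(x^0)/(5n)$ into an a priori bound of the form $\|x^0 - \xi\|_\infty < \kappa\,\delta(x^0)$ for a genuine root-vector $\xi$, and then checking that $x^0$ lies inside the convergence region of Theorem B. The starting point is the Lagrange interpolation identity, valid for every $x \in \mathcal{D}$,
\[
\frac{f(z)}{a_0} = \prod_{j=1}^n (z - x_j) + \sum_{i=1}^n W_i(x)\prod_{j\ne i}(z - x_j),
\]
which holds because both sides are polynomials of degree $n$ with leading coefficient $1$ that agree at $x_1,\ldots,x_n$. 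Factoring out $\prod_{j\ne i}(z-x_j)$ gives, away from the nodes,
\[
\frac{f(z)}{a_0} = \Big(\prod_{j\ne i}(z-x_j)\Big)\Big[(z-x_i) + W_i(x) + (z-x_i)\sum_{k\ne i}\frac{W_k(x)}{z-x_k}\Big],
\]
so a zero of $f$ near $x_i$ is exactly a zero of the bracketed factor.

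Next I would localize the zeros by Rouch\'e's theorem applied to the bracket on the circle $|z-x_i| = R$. Writing $W = \|W_f(x^0)\|_\infty$ and $\delta = \delta(x^0)$, on this circle the comparison function $(z-x_i)+W_i(x^0)$ has modulus at least $R-W$ and exactly one zero inside, whereas the perturbation $(z-x_i)\sum_{k\ne i}W_k(x^0)/(z-x_k)$ is bounded by $(n-1)RW/(\delta-R)$. Hence whenever $(n-1)RW/(\delta-R) < R-W$, each disk $\overline{D}(x_i,R)$ contains exactly one zero of $f$. Choosing $R = \tfrac54 W$ and using $W < \delta/(5n)$, this Rouch\'e inequality reduces to $\tfrac{n-1}{4n-1} < \tfrac14$, true for all $n\ge 2$. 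Since $R = \tfrac54 W < \delta/(4n) \le \delta/8 < \delta/2$, the disks are pairwise disjoint, so $f$ has $n$ distinct simple zeros, one per disk; this defines a root-vector $\xi$ with
\[
\|x^0 - \xi\|_\infty \le R = \tfrac54\|W_f(x^0)\|_\infty < \frac{\delta(x^0)}{4n}.
\]
(As a byproduct this supplies the simple-zeros assumption that Theorem B requires.)

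Finally I would verify that this places $x^0$ inside the region of Theorem B, i.e.\ that $\tfrac{1}{4n} \le \tfrac{2^{1/(n-1)}-1}{4\cdot 2^{1/(n-1)}-3}$. A direct rearrangement shows this is equivalent to $2^{1/(n-1)} \ge 1 + \tfrac{1}{4(n-1)}$, which follows from the convexity estimate $2^t \ge 1 + t\ln 2$ at $t = 1/(n-1)$ because $\ln 2 > \tfrac14$. Therefore $\|x^0-\xi\|_\infty < \tfrac{2^{1/(n-1)}-1}{4\cdot 2^{1/(n-1)}-3}\,\delta(x^0)$, and Theorem B yields convergence of the Weierstrass iteration \eqref{eq:Weierstrass-iteration} to $\xi$. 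The degenerate case $W=0$ is immediate, since then $f(x_i^0)=0$ for all $i$ and $x^0$ is already a root-vector.

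The main obstacle is the uniform-in-$n$ bookkeeping of constants: the radius multiplier in the Rouch\'e step and the final comparison must be tuned so that the induced bound on $\|x^0-\xi\|_\infty$ lands strictly inside Theorem B's region for every $n\ge 2$, the binding case being small $n$. The value $R = \tfrac54 W$ is precisely what makes both the Rouch\'e inequality $\tfrac{n-1}{4n-1}<\tfrac14$ and the comparison $2^{1/(n-1)}\ge 1+\tfrac{1}{4(n-1)}$ hold at once; a looser localization constant would already fail the reduction at $n=2$.
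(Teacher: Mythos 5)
Your proposal is correct, and at the top level it mirrors the paper's own strategy: Theorem~C is deduced from Theorem~B by first extracting from the hypothesis $\|W_f(x^0)\|_\infty < \delta(x^0)/(5n)$ a root-vector $\xi$ with $\|x^0-\xi\|_\infty$ small relative to $\delta(x^0)$, and then checking that $x^0$ lands strictly inside Theorem~B's region. But the localization step --- the heart of the matter --- is proved by a genuinely different key lemma. The paper (Remark~\ref{rem:BC}) runs Theorem~B through Corollary~\ref{cor:second-local-semilocal}, whose engine is Proposition~\ref{prop:localization}: there the disks $D_i=\{z:|z-x_i|\le c\,|W_i(x)|\}$ are shown to contain one zero apiece via the Braess--Hadeler inclusion theorem (Proposition~\ref{thm:Braess-Hadeler-1973}) with the tuned weights $\alpha_i=1/(c\,|W_i(x)|)$ and $\alpha_j=1/(|x_i-x_j|-c\,|W_i(x)|)$, plus a homotopy $g(z)=tf(z)+(1-t)\prod_j(z-x_j)$ for the boundary case; this machinery works over any algebraically closed normed field and every $p$-norm, and it yields the threshold $\|W_f(x^0)\|_\infty\le\delta(x^0)/(3n+3)$, which subsumes $\delta(x^0)/(5n)$ since $3n+3\le 5n$ for $n\ge2$. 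You instead prove the localization directly by Rouch\'e's theorem applied to the bracketed factor of the Weierstrass--Lagrange identity with radius $R=\tfrac54\|W_f(x^0)\|_\infty$, and your arithmetic checks out: the Rouch\'e condition reduces to $(n-1)/(4n-1)<1/4$, the disks are disjoint because $R<\delta(x^0)/(4n)\le\delta(x^0)/8$, the simple-zeros hypothesis of Theorem~B comes out as a byproduct, and the final comparison $1/(4n)\le\bigl(2^{1/(n-1)}-1\bigr)/\bigl(4\cdot2^{1/(n-1)}-3\bigr)$ is indeed equivalent to $2^{1/(n-1)}\ge 1+\tfrac{1}{4(n-1)}$, which follows from $2^t\ge 1+t\ln 2$. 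What your route buys is a short, self-contained, elementary argument with explicit constants, needing neither Braess--Hadeler nor a continuity-of-roots homotopy; what it gives up is generality, being tied to $\Cset$ (Rouch\'e) and to the $\infty$-norm, whereas the paper's apparatus is a uniform converter valid for all $p$ over any algebraically closed normed field. One point worth making explicit in your write-up: the bracketed factor is holomorphic on the closed disk $|z-x_i|\le R$ because its poles $x_k$, $k\ne i$, lie at distance at least $\delta(x^0)>R$ from $x_i$, so Rouch\'e applies without a meromorphic detour.
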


Both sides of the initial condition of Theorem~A depend on the desired root-vector $\xi$ 
which is unknown.
The initial condition of Theorem~B also contains unknown data, but only in the left-hand side of \eqref{eq:Wang-Zhao-initial-conditions}.
Usually, we say that these results are rather of theoretical importance.
The initial condition of Theorem~C is of significant practical importance since it depends only on available data: the coefficients of $f$, the degree $n$ and the initial guess $x^0$.

Surprisingly, each of these theorems is a consequence of the previous one. 
It turns out that this situation is not accidental.
Among the other results, 
we prove that from any theorem of type A, we can obtain a theorem of type B as well as a theorem of type C. 
Besides, from any theorem of type B we can obtain a theorem of type C.

The main purpose of this paper is to show that any local convergence theorem for a simultaneous method 
can be converted into a convergence theorem with computationally verified conditions. 
In other words, in this area both local and semilocal convergence results are of significant practical importance.
Our results are based on a new localization theorem for polynomial zeros.

%%%%%%%%%%%%%%%%%%%%%%%%%%%%%%%%%%%%%%%%%%%%%%%%%%%%%%%%%%%%%%%%%%%%%
%
%     Initial conditions for convergence of simultaneous methods
%
%%%%%%%%%%%%%%%%%%%%%%%%%%%%%%%%%%%%%%%%%%%%%%%%%%%%%%%%%%%%%%%%%%%%

%Section 2
\section{Initial conditions for convergence of simultaneous methods}
\label{Initial-conditions-for convergence-of-simultaneous-methods}

For given vectors ${x \in \Kset^n}$ and ${y \in \Rset^n}$, we define in ${\Rset^n}$ the vector
\[
\frac{x}{y} = \left( \frac{|x_1|}{y_1},\ldots,\frac{|x_n|}{y_n} \right),
\]
provided that $y$ has no zero components. Given $p$ such that ${1 \le p \le \infty}$, 
we denote by $q$ the conjugate exponent of $p$, i.e. $q$ is defined by means of
\[
1 \le q \le \infty \quad\text{and}\quad 1/p + 1/q = 1.
\]

Let $f \in \Kset[z]$ be a polynomial of degree $n \ge 2$, ${\xi \in \Kset^n}$ be a root-vector of 
$f$, and $x \in \Kset^n$ be an initial guess of an iterative method for simultaneous finding all zeros of $f$.
Below $R = R(n,p)$ is a real number which depends only on $n$ and ${p}$.
The most used initial conditions of convergence theorems of simultaneous methods (see, e.g., \cite{Pet08,Pro15,SAK94}) 
can be categorized into three types.

%Definition 2.1
\begin{defn} 
An initial condition is said to be: 
\begin{enumerate}[(a)]
	\item 
of the first type if it can be represented in the form
 \begin{equation} \label{eq:initial-conditions-first-type}
	\left\| \frac{x - \xi}{d(\xi)} \right\|_p  \le R \quad\text{or}\quad \frac{\|x - \xi\|_p}{\delta(\xi)} \le R;
\end{equation}
	\item 
	of the second type if it can be represented in the form
 \begin{equation} \label{eq:initial-conditions-second-type}
	\left\| \frac{x - \xi}{d(x)} \right\|_p  \le R \quad\text{or}\quad \frac{\|x - x\|_p}{\delta(x)} \le R;
\end{equation}
	\item 
	of the third type if it can be represented in the form
 \begin{equation} \label{eq:initial-conditions-third-type}
	\left\| \frac{W_f(x)}{d(x)} \right\|_p  \le R \quad\text{or}\quad \frac{\|W_f(x)\|_p}{\delta(x)} \le R. 
\end{equation}	
\end{enumerate}
\end{defn} 

%Remark 2.2
\begin{rem} \label{rem:FIC}
In the following, we state all results in terms of ${\|(x - \xi) / d(\xi) \|_p}$, ${\| (x - \xi) / d(x) \|_p}$ and ${\| W_f(x) / d(x) \|_p}$,
but all results remain true if we replace these initial conditions by  
${\| (x - \xi) \|_p / \delta(\xi)}$, ${\| (x - \xi) \|_p / \delta(x)}$ and ${\| W_f(x) \|_p / \delta(x)}$, respectively. 
\end{rem}

%%%%%%%%%%%%%%%%%%%%%%%%%%%%%%%%%%%%%%%%%%%%%%%%%%%%%
%
%     Localization of polynomial zeros
%
%%%%%%%%%%%%%%%%%%%%%%%%%%%%%%%%%%%%%%%%%%%%%%%%%%%%%

%Section 3
\section{Localization of polynomial zeros}
\label{sec:Localization-of-polynomial-zeros}

In this section we obtain a new localization theorem for polynomial zeros, which plays an important role in our paper.

\medskip
The following proposition is an improvement of Proposition~8.4 of \cite{Pro15}.

%Proposition 3.1
\begin{prop} \label{prop:disks-disjoint}
Let ${E = \| u / d(x) \|_p}$, where ${x,u \in \Kset^n}$ and ${1 \le p \le \infty}$. 
Let ${c \ge 0}$ be such that ${b c E < 1}$, where ${b = 2^{1/q}}$.
Then the closed disks 
\begin{equation}  \label{eq:disks-disjoint}
D_i = \{z \in \Kset : |z - x_i| \le c \, |u_i| \}, \quad i = 1,2,\ldots,n,
\end{equation}
are mutually disjoint.
\end{prop}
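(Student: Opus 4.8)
The plan is to reduce the claim to the elementary geometric fact that two closed disks in $\Kset$ are disjoint precisely when the distance between their centers strictly exceeds the sum of their radii. Fixing a pair of indices $i \ne j$, it therefore suffices to establish the strict inequality $c\,(|u_i| + |u_j|) < |x_i - x_j|$; once this is shown for an arbitrary pair, it follows that $D_i \cap D_j = \emptyset$ for all $i \ne j$, which is exactly mutual disjointness.

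To prove this inequality I would first exploit the definition of $d$ in \eqref{eq:d}. Since $d_i(x) = \min_{k \ne i} |x_i - x_k| \le |x_i - x_j|$ and, symmetrically, $d_j(x) \le |x_i - x_j|$, setting $a_k = |u_k|/d_k(x)$ gives $|u_i| + |u_j| = a_i\, d_i(x) + a_j\, d_j(x) \le (a_i + a_j)\,|x_i - x_j|$. This turns the whole problem into bounding the two-term sum $a_i + a_j$ by $bE$, where $E = \|u/d(x)\|_p$ and $b = 2^{1/q}$.

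The decisive step, and the place where the precise constant $b = 2^{1/q}$ must appear rather than a crude factor $2$, is a power-mean (equivalently, Hölder) estimate applied to the two numbers $a_i, a_j \ge 0$. Because $1 - 1/p = 1/q$, one has $\|v\|_1 \le 2^{1/q}\,\|v\|_p$ on $\Rset^2$, that is $a_i + a_j \le 2^{1/q}\,(a_i^p + a_j^p)^{1/p}$. Since $a_i^p + a_j^p$ is a sum of just two of the non-negative terms defining $E^p = \|u/d(x)\|_p^p$, we get $(a_i^p + a_j^p)^{1/p} \le E$, hence $a_i + a_j \le 2^{1/q} E = bE$. Combining this with the previous paragraph yields $c\,(|u_i| + |u_j|) \le bcE\,|x_i - x_j| < |x_i - x_j|$, the final inequality being exactly the hypothesis $bcE < 1$; this settles the pair $(i,j)$ and thus, the pair being arbitrary, all pairs.

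The main obstacle is obtaining the \emph{optimal} constant: the two-term power-mean inequality is what delivers $2^{1/q}$ uniformly in $p$, and it degenerates correctly at the endpoints, reading $a_i + a_j \le 2\max\{a_i, a_j\}$ when $p = \infty$ (so $q = 1$, $b = 2$) and the trivial $a_i + a_j \le E$ when $p = 1$ (so $q = \infty$, $b = 1$). I would also note that the very assumption $E = \|u/d(x)\|_p < \infty$ forces $d(x)$ to have strictly positive components, so that $x$ has distinct components and every quotient above is well defined.
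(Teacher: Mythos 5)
Your proof is correct and follows essentially the same route as the paper's: bound $d_i(x),\, d_j(x) \le |x_i - x_j|$ from the definition of $d$, apply the two-term H\"older (power-mean) estimate to $|u_i|/d_i(x) + |u_j|/d_j(x)$ to obtain the factor $b = 2^{1/q}$, and invoke $b\,c\,E < 1$ --- the paper compresses exactly these steps into one displayed chain of inequalities. One harmless caveat: over a general normed field $\Kset$ (in particular a non-Archimedean one) the \emph{biconditional} in your opening sentence fails --- disks can be disjoint even when the center distance does not exceed the sum of the radii --- but your argument only uses the triangle-inequality direction, which is valid in any normed field, so nothing breaks.
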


\begin{proof}
From the definition of $d(x)$, H\"older's inequality and ${b c E < 1}$, we obtain for ${i \ne j}$,
\[
c (|u_i| + |u_j|) \le c \left( \frac{|u_i|}{d_i(x)} + \frac{|u_j|}{d_j(x)} \right) |x_i - x_j| \le 
b \, c \, E \, |x_i - x_j| < |x_i - x_j|,
\] 
which proves that the disks \eqref{eq:disks-disjoint} are mutually disjoint. 
\end{proof}

The following result is due to Braess and Hadeler \cite{BH73} in the case when $f$ is a complex polynomial, 
but the proof in the general case is the same.  

%Proposition 3.2
\begin{prop}[Braess and Hadeler \cite{BH73}] \label{thm:Braess-Hadeler-1973}
Let ${f \in \Kset[z]}$ be a polynomial of degree ${n \ge 2}$,
${x \in \Kset^n}$ be a vector with distinct components, and let ${\alpha_1,\ldots,\alpha_n}$ be positive numbers. 
Then the union of the disks
\begin{equation} \label{eq:Braess and Hadeler-disks}
	G_i = \left\{ z \in \Kset : |z - x_i| \le \frac{1}{\alpha_i} \sum^n_{j = 1} \alpha_j \, |W_j(x)| \right\}, \qquad i =1,\ldots,n,
\end{equation}
contains all the zeros of $f$. Besides, if the union of $m$ of these disks is disjoint from the union of the remaining disks, 
then it contains exactly $m$ zeros of $f$, counted with their multiplicity.  
\end{prop}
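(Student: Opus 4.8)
The plan is to base everything on the partial-fraction identity that comes from the Lagrange-type representation of $f$. Since $f$ has degree $n$, leading coefficient $a_0$, and the components of $x$ are distinct, the rational function $f(z)/\bigl(a_0\prod_{k=1}^n(z-x_k)\bigr)$ tends to $1$ at infinity and has simple poles at the points $x_k$ with residues $W_k(x)$. This yields the identity
\[
\frac{f(z)}{a_0\prod_{k=1}^n(z-x_k)} = 1 + \sum_{i=1}^n \frac{W_i(x)}{z-x_i},
\]
valid for $z\notin\{x_1,\dots,x_n\}$, which is the engine of the whole argument. If all $W_i(x)$ vanish the statement is trivial, since then $f(z)=a_0\prod_k(z-x_k)$ and each disk $G_i$ degenerates to the point $x_i$; so I may assume $\sigma:=\sum_j\alpha_j|W_j(x)|>0$.

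For the inclusion statement I would argue by contradiction. Let $\zeta$ be a zero of $f$. If $\zeta=x_i$ for some $i$ then trivially $\zeta\in G_i$, so assume $\zeta\neq x_i$ for every $i$. Evaluating the identity at $\zeta$ and using $f(\zeta)=0$ gives $1=-\sum_i W_i(x)/(\zeta-x_i)$, hence
\[
1 \le \sum_{i=1}^n \frac{|W_i(x)|}{|\zeta-x_i|}.
\]
If, contrary to the claim, $\zeta$ lay outside every disk, then $|\zeta-x_i|>\sigma/\alpha_i$ for all $i$, so each summand is strictly below $\alpha_i|W_i(x)|/\sigma$ and the whole sum is strictly below $1$ — a contradiction. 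Hence $\zeta$ belongs to at least one $G_i$, which proves that $\bigcup_i G_i$ contains all zeros of $f$.

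For the counting statement I would split $f = g + h$ with $g(z)=a_0\prod_k(z-x_k)$ and $h(z)=a_0\sum_i W_i(x)\prod_{j\ne i}(z-x_j)=g(z)\sum_i W_i(x)/(z-x_i)$. The same estimate as above, now applied to any point $z\notin\bigcup_i G_i$, gives $\bigl|\sum_i W_i(x)/(z-x_i)\bigr|<1$ and therefore $|h(z)|<|g(z)|$ there. Writing $M$ for the index set of the $m$ chosen disks, the hypothesis says that $U_M=\bigcup_{i\in M}G_i$ and $U_{M^c}=\bigcup_{i\notin M}G_i$ are disjoint compact sets; I would enclose $U_M$ in an open set $\Omega$ whose boundary avoids every disk $G_i$ and whose closure misses $U_{M^c}$ (for instance an $\varepsilon$-neighbourhood of $U_M$ with $\varepsilon$ smaller than the gap between the two unions). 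On $\partial\Omega$ the inequality $|h|<|g|$ holds, so by Rouch\'e's theorem $f$ and $g$ have the same number of zeros in $\Omega$, counted with multiplicity. Since the zeros of $g$ are exactly the distinct points $x_k$ and precisely those with $k\in M$ lie in $\Omega$, this number is $m$; as every zero of $f$ already lies in $\bigcup_i G_i$, the $m$ zeros found in $\Omega$ are exactly the zeros of $f$ in $U_M$.

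The delicate point is this counting step. Over $\Cset$ the argument principle does the job cleanly once the contour $\partial\Omega$ has been arranged to miss all the disks. To obtain the statement over an arbitrary algebraically closed normed field $\Kset$, where complex-analytic tools are unavailable, I would instead use the homotopy $f_t=g+t\,h$ with $t\in[0,1]$: a direct computation shows that the Weierstrass corrections of $f_t$ at $x$ are $t\,W_i(x)$, so by the inclusion part already proved the zeros of $f_t$ stay inside the shrunken disks $G_i^{(t)}\subseteq G_i$ for every $t$, and hence can never cross the gap separating $U_M$ from $U_{M^c}$. The number of zeros of $f_t$ in $U_M$ is then an integer-valued function of $t$ that cannot jump, equals $m$ at $t=0$ (where the zeros are the distinct points $x_k$), and therefore equals $m$ at $t=1$. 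This is the step demanding the most care, since it rests on the continuous dependence of the zeros on $t$ and on the preservation of the disjointness of the two unions throughout the deformation.
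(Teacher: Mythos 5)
Your proof is correct. Note, however, that the paper itself gives no proof of this proposition: it cites Braess and Hadeler \cite{BH73} for the complex case and merely remarks that ``the proof in the general case is the same,'' so what you have done is supply the argument the paper leaves to the literature. Your two ingredients are exactly the right ones. The inclusion part, via the partial-fraction identity $f(z)/\bigl(a_0\prod_k(z-x_k)\bigr)=1+\sum_i W_i(x)/(z-x_i)$ and the exclusion estimate $\sum_i|W_i(x)|/|\zeta-x_i|<1$ outside the disks, is the classical Braess--Hadeler argument and is field-independent, since the identity is purely algebraic and only the multiplicativity and triangle inequality of the norm are used. For the counting part, your Rouch\'e argument settles the case $\Kset=\Cset$, and your homotopy $f_t=g+th$ with $W_{f_t}(x)=t\,W_f(x)$, combined with the continuous dependence of the root multiset on the coefficients and the positive gap between the two unions, handles general $\Kset$; this is precisely the device the paper itself employs (with the same citation to Lang \cite{Lan94}) in Case~2 of the proof of Proposition~\ref{prop:localization}, so your proof in effect substantiates the paper's unproved remark. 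You correctly flag the one delicate step: the ``integer count cannot jump'' claim does rest on continuity of roots plus the fact that the zeros of every $f_t$ remain trapped in $\bigcup_i G_i$, so no root can cross the gap as $t$ runs over the connected interval $[0,1]$; with that observation made explicit, and with the degenerate case $W_f(x)=0$ disposed of separately as you do, there is no gap.
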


%Proposition 3.3
\begin{prop} \label{prop:localization}
Let ${f \in \Kset[z]}$ be a polynomial of degree ${n \ge 2}$ and 
${1 \le p \le \infty}$.
Suppose there exists ${x \in \Kset^n}$ with distinct components and ${c \ge 1}$ such that
\begin{equation} \label{eq:inequality-c}
	b \, c \, E_f(x) < 1 \quad\text{and}\quad \frac{1}{c} + \frac{a \, E_f(x)}{1 - c \, E_f(x)} \le 1,
\end{equation}
where ${b = 2^{1/q}}$, ${a = (n - 1)^{1/q}}$ and the function ${E_f \colon \mathcal{D} \to \Rset_+}$ is defined by
\( 
E_f(x) = \| W_f(x) / d(x) \|_p \, . 
\)  
Then $f$ has only simple zeros in $\Kset$ and the disks 
\begin{equation}  \label{eq:big-disks-disjoint}
D_i = \{z \in \Kset : |z - x_i| \le c \, |W_i(x)| \}, \quad i = 1,2,\ldots,n,
\end{equation}
are mutually disjoint and each of them contains exactly one zero of $f$.	
\end{prop}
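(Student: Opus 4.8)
The plan is to combine the disjointness already established in Proposition~\ref{prop:disks-disjoint} with a Rouché-type count of the zeros inside each disk.

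First I would settle the disjointness and the bookkeeping. Applying Proposition~\ref{prop:disks-disjoint} with $u = W_f(x)$ gives $E = E_f(x)$, so the hypothesis $b\,c\,E_f(x) < 1$ makes the disks \eqref{eq:big-disks-disjoint} mutually disjoint. The same inequality gives $c\,E_f(x) < 1$ (since $b \ge 1$), hence $c\,|W_i(x)| \le c\,E_f(x)\,d_i(x) < d_i(x) \le |x_i - x_j|$ for $j \ne i$; thus $x_j \notin D_i$ when $j \ne i$, while $x_i$ is the centre of $D_i$. Because $f$ has $n$ zeros counted with multiplicity, the whole proposition reduces to showing that each $D_i$ contains exactly one zero: disjointness together with the total count $n$ then forces every zero to be simple and to lie in $\bigcup_i D_i$.

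Next I would introduce the comparison polynomial $g(z) = a_0 \prod_{j=1}^n (z - x_j)$, which has the same degree and leading coefficient as $f$, so $f - g$ has degree at most $n - 1$. Lagrange interpolation at the nodes $x_1,\dots,x_n$, combined with $f(x_k) = a_0 W_k(x)\prod_{j\ne k}(x_k - x_j)$, yields the identity $f(z) - g(z) = a_0 \sum_{k=1}^n W_k(x)\prod_{j \ne k}(z - x_j)$, and dividing by $g$ gives $\frac{f(z)}{g(z)} = 1 + \sum_{k=1}^n \frac{W_k(x)}{z - x_k}$ for $z \notin \{x_1,\dots,x_n\}$. This is the key algebraic step; the rest is estimation. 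On the boundary $|z - x_i| = c\,|W_i(x)|$ the term $k = i$ contributes exactly $1/c$, while for $k \ne i$ I use $|z - x_k| \ge |x_i - x_k| - c\,|W_i(x)| \ge (1 - c\,E_f(x))\,|x_i - x_k| \ge (1 - c\,E_f(x))\,d_k(x)$, and Hölder's inequality over the $n-1$ indices $k \ne i$ introduces the factor $a = (n-1)^{1/q}$, producing $\sum_{k \ne i} |W_k(x)|/|z - x_k| \le a\,E_f(x)/(1 - c\,E_f(x))$. Adding the two parts and invoking the second inequality in \eqref{eq:inequality-c} gives $\bigl|\sum_k W_k(x)/(z - x_k)\bigr| \le 1/c + a\,E_f(x)/(1 - c\,E_f(x)) \le 1$ on $\partial D_i$, i.e.\ $|f(z) - g(z)| \le |g(z)|$ there. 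Since $g$ has exactly one zero, namely $x_i$, inside $D_i$, Rouché's theorem then gives the same count for $f$. This argument-principle count is exactly the mechanism underlying Proposition~\ref{thm:Braess-Hadeler-1973}, so the step may equivalently be phrased through that proposition.

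The main obstacle is the non-strict inequality: \eqref{eq:inequality-c} only yields $\le 1$, whereas classical Rouché wants strictness on the whole boundary. I expect to resolve this as follows. Since $g$ does not vanish on $\partial D_i$, the quotient $f/g$ maps $\partial D_i$ into the closed disk $\{w : |w - 1| \le 1\}$, whose sole contact with the origin is the point $0$; as $z$ traverses the circle the rotating term $W_i(x)/(z - x_i)$ makes the triangle inequality strict at all but finitely many points, so $f/g$ cannot wind around $0$, and the boundary form of Rouché's theorem (equivalently, the argument principle for the meromorphic $f/g$, whose only pole in $D_i$ is the simple pole $x_i$) gives the count $1$. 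The degenerate case $W_i(x) = 0$ is handled directly: then $D_i = \{x_i\}$ and $f(x_i) = 0$, so $x_i$ is the unique zero in $D_i$. Collecting these counts over all $i$ shows each disk contains exactly one zero and hence that all zeros of $f$ are simple.
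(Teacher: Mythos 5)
Your strict-inequality argument is correct and is in substance the paper's own Case~1: the paper feeds the weights $\alpha_i = 1/(c\,|W_i(x)|)$ and $\alpha_j = 1/(|x_i - x_j| - c\,|W_i(x)|)$ into Proposition~\ref{thm:Braess-Hadeler-1973} and runs exactly your H\"older estimate $1/c + a\,E_f(x)/(1 - c\,E_f(x))$; your Lagrange identity $f/g = 1 + \sum_k W_k(x)/(z - x_k)$ plus Rouch\'e is the analytic mechanism behind that proposition, as you note. But two gaps remain. First, a generality gap: Rouch\'e's theorem and the argument principle are tools over $\Cset$, while the proposition is stated over an arbitrary algebraically closed normed field $\Kset$ (e.g.\ the $p$-adic complex numbers); your hedge of rerouting through Proposition~\ref{thm:Braess-Hadeler-1973} repairs this only for the strict case, since when the second inequality in \eqref{eq:inequality-c} is an equality your estimate yields $r_j + R_i \le |x_i - x_j|$ rather than $<$, the disks $G_j$ may be tangent to $D_i$, and the disjointness hypothesis of Proposition~\ref{thm:Braess-Hadeler-1973} can fail.

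Second, and decisively, your treatment of the equality case is wrong, not merely informal: in that case $f$ can vanish \emph{exactly on} $\partial D_i$, so $f/g$ has a zero on the contour, its winding number is undefined, and both classical and symmetric Rouch\'e are inapplicable --- ``strict at all but finitely many points'' does not rescue this. Concretely, take $f(z) = z^2 - 1$, $x = (t,-t)$ with $t > 1$, and $p = \infty$ (so $q = 1$, $a = 1$, $b = 2$), with $c = 2t/(t+1) \in (1,2)$. Then $W_1(x) = (t^2-1)/(2t) = -W_2(x)$, $d_i(x) = 2t$, $E_f(x) = (t^2-1)/(4t^2)$, and one checks $b\,c\,E_f(x) = (t-1)/t < 1$ while $1/c + a\,E_f(x)/(1 - c\,E_f(x)) = (t+1)/(2t) + (t-1)/(2t) = 1$ exactly: the hypotheses \eqref{eq:inequality-c} hold with equality. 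The radius of $D_1$ is $c\,|W_1(x)| = t - 1$, so the unique zero $z = 1$ of $f$ in $D_1$ lies on $\partial D_1$ and the \emph{open} disk contains no zero at all; any argument-principle count over the open disk returns $0$, not $1$, and your winding argument cannot even be set up since $f/g$ vanishes on the contour. The idea you are missing is the paper's homotopy: set $g_t(z) = t f(z) + (1-t)\prod_{j=1}^n (z - x_j)$, observe $W_{g_t}(x) = t\,W_f(x)$ so that both inequalities in \eqref{eq:inequality-c} become strict for every $t \in [0,1)$, apply the strict case to $g_t$, and pass to $t = 1$ using the continuous dependence of the zeros of a polynomial on its coefficients --- an argument valid over any algebraically closed normed field and one that correctly delivers the count for the \emph{closed} disks, boundary zeros included.
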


\begin{proof}
By the first inequality in \eqref{eq:inequality-c} and Proposition~\ref{prop:disks-disjoint} with ${E = E_f(x)}$, we conclude that the disks \eqref{eq:big-disks-disjoint} are mutually disjoint. It remains to prove that each of these disks contains exactly one zero of $f$.
We assume that ${E_f(x) \ne 0}$, since the case ${E_f(x) = 0}$ is trivial.  
We divide the proof into two cases.
%Case1
\begin{case}   
Suppose that the second inequality in \eqref{eq:inequality-c} is strict. 
Let ${i \in \{1,\ldots,n\}}$ be fixed, and let $R_i$ be the radius of the disk $D_i$.
We have to prove that $D_i$ contains exactly one zero of $f$.  
From ${c \, E_f(x) < 1}$ and the definitions of $d(x)$ and $E_f(x)$, we obtain that for each ${j \ne i}$, 
\begin{equation} \label{eq:inequality-W}
	|x_i - x_j| - c \, |W_j(x)| \ge (1 - c \, E_f(x)) d_j(x) > 0.
\end{equation}
Consider the disks \eqref{eq:Braess and Hadeler-disks} with ${\alpha_1,\ldots,\alpha_n}$ defined as follows
 \begin{equation} \label{eq:inequality-numbers-alpha}
	\alpha_i = \frac{1}{c |W_i(x)|} \quad\text{ and }\quad \alpha_j = \frac{1}{|x_i - x_j|- c \, |W_i(x)|} \quad\text{for } \, j \ne i.
\end{equation}
Denote by ${r_1,\ldots,r_n}$ the radii of the disks ${G_1,\ldots,G_n}$, respectively.
It follows from \eqref{eq:inequality-numbers-alpha}, \eqref{eq:inequality-W}, H\"older's inequality and \eqref{eq:inequality-c} that
\[
\sum^n_{j = 1} \alpha_j \, |W_j(x)| = \frac{1}{c} + \sum_{j \ne i} \frac{|W_j(x)|}{|x_i - x_j|- c \, |W_i(x)|} 
\le  \frac{1}{c} + \frac{a \, E_f(x)}{ 1 - c \, E_f(x)} < 1.
\]
Therefore, ${r_j < 1 / \alpha_j}$ ${(j = 1,\ldots,n)}$ which is equivalent to the inequalities
\[
r_i < R_i \quad\text{and}\quad r_j + R_i < |x_i - x_j| \, \text{ for } j \ne i. 
\]
This means that the disk $G_i$ is a subset of the interior of $D_i$, and that $D_i$ is disjoint from each of the disks $G_j$ for ${j \ne i}$.
Then it follows from Proposition~\ref{thm:Braess-Hadeler-1973} that $D_i$ contains exactly one zero of $f$. 
Thus, if the second inequality in \eqref{eq:inequality-c} is strict, then the disks \eqref{eq:big-disks-disjoint} are mutually disjoint and each of them contains exactly one zero of $f$.  
\end{case}
%Case2
\begin{case}   
Suppose that the second inequality in \eqref{eq:inequality-c} is an equality. 
Without loss of generality we can assume that $f$ is monic.
Consider the monic polynomial 
\(
	{g(z) = t f(z) + (1 - t) \prod_{j = 1}^n (z - x_j)}
\)
of degree $n$, where ${t \in [0,1]}$ is a parameter.
Since ${W_g(x) = t \, W_f(x)}$ and ${E_g(x) = t \, E_f(x)}$, then for every ${t \in [0,1)}$,
\begin{equation} \label{eq:inequality-c-case2}
|W_g(x)| < |W_f(x)|, \quad b \, c \, E_g(x) < 1 \quad\text{and}\quad \frac{1}{c} + \frac{a \, E_g(x)}{1 - c \, E_g(x)} < 1.
\end{equation}
From this and Case~1, we conclude that each of the disks \eqref{eq:big-disks-disjoint} contains exactly one zero of $g$ 
provided that ${t \in [0,1)}$. This remains true also in the case ${t = 1}$, because the zeros of a polynomial in ${\Kset[z]}$ 
depend continuously on its coefficients (see, e.g., Lang \cite[pp.~43--41]{Lan94}). 
This completes the proof since ${g = f}$ for ${t =1}$.
\qedhere
\end{case}
\end{proof}

In what follows, for given ${a \ge 1}$ we define the real functions $\alpha$ and $\beta$ by 
\begin{equation} \label{eq:alpha}
	\alpha(t) = \frac{2}{1 - (a - 1) t + \sqrt{(1 - (a - 1) t)^2  - 4 t}} \quad\text{and}\quad \beta(t) = \frac{2}{1 - (a - 1) t} \, .
\end{equation}
Note that ${1 \le \alpha(t) \le \beta(t)}$ provided that ${0 \le t \le 1 / (1 + \sqrt{a})^2}$.  

%Theorem 3.4
\begin{thm} \label{thm:localization}
Let ${f \in \Kset[z]}$ be a polynomial of degree ${n \ge 2}$.
Suppose there exists a vector ${x \in \Kset^n}$ with distinct components such that
\begin{equation}  \label{eq:localization1}
E_f(x) = \left\| \frac{W_f(x)}{d(x)} \right\|_p  \le \frac{1}{(1 + \sqrt a)^2}
\end{equation}
for some $1 \le p \le \infty$, where ${a = (n - 1)^{1/q}}$. 
In the case when ${n = 2}$ and ${p = \infty}$ we assume that inequality 
\eqref{eq:localization1} is strict. 
Then $f$ has only simple zeros in $\Kset$. Besides, for any real number ${c \in [\alpha(E_f(x)),\beta(E_f(x))]}$ the disks  
\eqref{eq:big-disks-disjoint}
are mutually disjoint and each of them contains exactly one zero of $f$.	
\end{thm}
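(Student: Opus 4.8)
The plan is to reduce the entire statement to Proposition~\ref{prop:localization}. Writing $E = E_f(x)$, I would show that for \emph{every} $c$ in the interval $[\alpha(E),\beta(E)]$ the three hypotheses of that proposition hold, namely $c \ge 1$, the strict bound $b c E < 1$ with $b = 2^{1/q}$, and the inequality ${\frac{1}{c} + \frac{a E}{1 - cE} \le 1}$ with $a = (n-1)^{1/q}$. Once this is established, Proposition~\ref{prop:localization} delivers both conclusions at once: that $f$ has only simple zeros, and that for each admissible $c$ the disks \eqref{eq:big-disks-disjoint} are mutually disjoint and each contains exactly one zero of $f$. So the work is entirely in verifying those three conditions uniformly in $c$.

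First I would record that the hypothesis $E \le 1/(1 + \sqrt a)^2$ is exactly what makes $\alpha(E)$ and $\beta(E)$ well defined. Expanding the radicand ${(1 - (a-1)E)^2 - 4E}$ produces the quadratic ${(a-1)^2 E^2 - 2(a+1)E + 1}$ in $E$, whose roots are $1/(1+\sqrt a)^2$ and $1/(\sqrt a - 1)^2$; hence the radicand is nonnegative precisely when $E \le 1/(1+\sqrt a)^2$. The bound $1 \le \alpha(E) \le \beta(E)$ is the remark following \eqref{eq:alpha}, so the condition $c \ge \alpha(E) \ge 1$ is immediate and needs no further argument.

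For the strict bound $b c E < 1$ I would exploit that $bcE$ is increasing in $c$, so it suffices to test $c = \beta(E)$. Since $\beta$ is increasing in $E$, at the extreme value $E = 1/(1+\sqrt a)^2$ one gets $1 - (a-1)E = 2/(1+\sqrt a)$, whence $\beta(E) \le 1 + \sqrt a$ and therefore $b\,\beta(E)\,E \le 2^{1/q}/(1 + \sqrt a)$. The inequality $2^{1/q} < 1 + \sqrt a$ then holds for all admissible $n$ and $p$ except the single case $n = 2$, $p = \infty$, where $2^{1/q} = 1 + \sqrt a = 2$ and equality threatens; there the strict hypothesis $E < 1/4$ restores strictness through $bcE \le 4E < 1$. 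This is precisely why the statement singles out $n = 2$, $p = \infty$, and pinning down that boundary is the one genuinely delicate point of the proof.

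Finally, for the second inequality I would set $\varphi(c) = \frac{1}{c} + \frac{aE}{1 - cE}$, which is convex on $(0, 1/E)$ as a sum of two convex functions (one checks $\beta(E) < 1/E$ from $E < 1/(a+1)$, so the whole interval lies in the domain). At $c = \alpha(E)$ one has equality $\varphi(c) = 1$, because after rationalization $\alpha(E)$ is exactly the smaller root of ${E c^2 + ((a-1)E - 1)c + 1}$, which is the cleared-denominator form of $\varphi(c) = 1$. At $c = \beta(E)$ a direct substitution collapses $\varphi(\beta(E)) \le 1$ back to ${(a-1)^2 E^2 - 2(a+1)E + 1 \ge 0}$, i.e. once more to $E \le 1/(1+\sqrt a)^2$. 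By convexity $\varphi$ cannot exceed the larger of its endpoint values, so $\varphi(c) \le 1$ throughout $[\alpha(E),\beta(E)]$. With all three conditions now verified for every such $c$, an appeal to Proposition~\ref{prop:localization} completes the proof; the only algebra worth spelling out is the identity showing $\varphi(\beta(E)) \le 1$ reduces to the same discriminant condition, everything else being monotonicity or convexity.
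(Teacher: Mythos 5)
Your proof is correct and takes essentially the same route as the paper: both reduce the theorem to Proposition~\ref{prop:localization} by verifying, for every ${c \in [\alpha(E_f(x)),\beta(E_f(x))]}$, that ${c \ge 1}$, that ${b\,c\,E_f(x) < 1}$ (isolating the case ${n=2}$, ${p=\infty}$ exactly as you do, via ${b \le 1+\sqrt{a}}$ with equality only there), and that the second inequality in \eqref{eq:inequality-c} holds. The only difference is one of detail, not of approach: the paper simply asserts that ${c \in [\alpha,\beta]}$ implies the second inequality (implicitly because $\alpha(E)$ and the larger root of ${Ec^2-(1-(a-1)E)c+1}$ bracket its solution set, with ${\beta(E)}$ not exceeding the larger root), whereas you verify it by convexity of $\varphi$ together with the two endpoint computations --- equivalent algebra, spelled out more fully.
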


\begin{proof}
	Let ${b = 2^{1/q}}$.
	It is easy to show that ${b \le 1 + \sqrt{a}}$ with equality only if ${n = 2}$ and ${p = \infty}$. Then it follows from 
	\eqref{eq:localization1} that ${E_f(x) < 1 / (2 b + a -1)}$. From this and 
${c \le \beta(E_f(x))}$, we get
\(
	{b \, c \, E_f(x) \le b \, E_f(x) \, \beta(E_f(x)) < 1}, 
\)
which proves the first inequality in \eqref{eq:inequality-c}.
The assumption ${c \in [\alpha(E_f(x)),\beta(E_f(x))]}$ implies the second inequality in \eqref{eq:inequality-c}. 
Now the statement follows from Proposition~\ref{prop:localization}.
\end{proof}

%Remark 3.5
\begin{rem} \label{rem:example}
Note that the strictness assumption cannot be dropped from Theorem~\ref{thm:localization}. 
Indeed, if ${f(z) = z^2}$ and ${x = (-1,1) \in \Kset^2}$, then \eqref{eq:localization1} with ${p = \infty}$ is an equality, 
but Theorem~\ref{thm:localization} does not hold.
\end{rem}

%Corollary 3.6
\begin{cor} \label{cor:localization}
Let ${f \in \Kset[z]}$ be a polynomial of degree ${n \ge 2}$.
Suppose there exists a vector ${x \in \Kset^n}$ with distinct components such that
\begin{equation}  \label{eq:localization2}
E_f(x) = \left\| \frac{W_f(x)}{d(x)} \right\|_p  \le \frac{1}{2(a + 1)}
\end{equation}
for some $1 \le p \le \infty$, where ${a = (n - 1)^{1/q}}$. In the case ${n = 2}$ and ${p = \infty}$ we assume that the inequality 
\eqref{eq:localization2} is strict. 
Then $f$ has only simple zeros in $\Kset$. Besides, for any real number $c \in [\gamma(E_f(x)),\beta(E_f(x))]$,  
where ${\gamma(t) = 1 / (1 - (a + 1) t)}$,
the closed disks \eqref{eq:big-disks-disjoint} are mutually disjoint and each of them contains exactly one zero of $f$.
\end{cor}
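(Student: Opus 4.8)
The plan is to deduce Corollary~\ref{cor:localization} directly from Theorem~\ref{thm:localization}. Writing $t = E_f(x)$ for brevity, I would establish two facts: first, that hypothesis \eqref{eq:localization2} implies hypothesis \eqref{eq:localization1}, so that Theorem~\ref{thm:localization} is applicable and already yields that $f$ has only simple zeros; and second, that $[\gamma(t),\beta(t)] \subseteq [\alpha(t),\beta(t)]$, so that the conclusion of the theorem holds for every $c$ admitted by the corollary. Once both inclusions are in hand, the statement is immediate.

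For the first step, I would observe that $(1 + \sqrt a)^2 \le 2(a + 1)$, since this is equivalent to $(\sqrt a - 1)^2 \ge 0$. Hence $1 / (2(a+1)) \le 1 / (1+\sqrt a)^2$, so \eqref{eq:localization2} gives \eqref{eq:localization1}. In the exceptional case $n = 2$, $p = \infty$ one has $a = 1$, so both bounds equal $1/4$ and the two inequalities coincide; thus the strictness assumed in the corollary is exactly the strictness required by the theorem, and the strictness bookkeeping transfers without loss in every case.

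For the second step, the upper endpoints of the two intervals agree, so it suffices to prove $\alpha(t) \le \gamma(t)$. First note that $t \le 1/(2(a+1))$ forces $(a+1)t < 1$, so $\gamma(t)$ and $\beta(t)$ are positive; the inequality $\gamma(t) \le \beta(t)$ reduces to $(a+3)t \le 1$, which follows from $a + 3 \le 2(a+1)$ (valid since $a \ge 1$) together with $2(a+1)t \le 1$. To compare $\alpha$ and $\gamma$, I would set $s = 1 - (a-1)t$, so that $\alpha(t) = 2 / (s + \sqrt{s^2 - 4t})$ and $2(1 - (a+1)t) = 2s - 4t$. Cross-multiplying $\alpha(t) \le \gamma(t)$ (legitimate since all denominators are positive) reduces it to $s - 4t \le \sqrt{s^2 - 4t}$. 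If $s - 4t \le 0$ this is clear; otherwise squaring and simplifying leaves $4t\,(4t - 2s + 1) \le 0$, that is $2(a+1)t \le 1$, which is precisely \eqref{eq:localization2}. Hence $\alpha(t) \le \gamma(t)$, and the desired inclusion follows.

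The routine verifications — the two elementary inequalities of the first step and the positivity of the denominators — are harmless. The only place demanding genuine care is the comparison $\alpha(t) \le \gamma(t)$, because of the square root in $\alpha$; the main obstacle is organizing this estimate cleanly, which the substitution $s = 1 - (a-1)t$ and the case distinction on the sign of $s - 4t$ resolve, with the decisive inequality collapsing exactly to the hypothesis \eqref{eq:localization2}.
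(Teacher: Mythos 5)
Your proof is correct and takes essentially the same route as the paper, whose one-line proof deduces the corollary from Theorem~\ref{thm:localization} via the inequality $\alpha(t) \le \gamma(t) \le \beta(t)$; you simply supply the elementary verification (substitution $s = 1-(a-1)t$, case split on the sign of $s-4t$, squaring) that the paper leaves implicit. Your range bookkeeping is in fact slightly more careful than the paper's, since $\alpha(t) \le \gamma(t)$ genuinely needs $2(a+1)t \le 1$ (as your squaring step shows), whereas the paper asserts the inequality on the larger range $0 \le t \le 1/(a+1)$ --- harmless here, because the hypothesis \eqref{eq:localization2} supplies exactly the bound you use.
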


\begin{proof}
It follows from Theorem~\ref{thm:localization} and the inequality ${\alpha(t) \le \gamma(t) \le \beta(t)}$ which holds for 
$0 \le t \le 1 / (a + 1)$.
\end{proof}

The next result generalizes and improves Corollary~1.1 of \cite{Pet08}.

%Corollary 3.7
\begin{cor}  \label{cor:localization-PHI}
Let ${f \in \Kset[z]}$ be a polynomial of degree $
{n \ge 2}$.
Suppose there exists a vector ${x \in \Kset^n}$ with distinct components such that
\begin{equation}  \label{eq:localization-PHI}
	\left\| \frac{W_f(x)}{d(x)} \right\|_p  \le R %%%C \le \frac{1}{2(a + 1)}
\end{equation}
for some $1 \le p \le \infty$ and ${0 \le R \le 1 / (2 a + 2)}$, where $a = (n - 1)^{1/q}$. In the case $n = 2$, $p = \infty$ and 
${R = 1 / (2 a + 2)}$, we assume that the second inequality in \eqref{eq:localization-PHI} is strict. 
Then $f$ has only simple zeros in $\Kset$ and the disks
\begin{equation}  %\label{eq:inclusion-disks-2}
D_i  = \left\{ z \in\Kset : |z - x_i| \le \frac{|W_i(x)|}{1 - (a + 1) R} \right\},
\quad i = 1,2,\ldots,n,
\end{equation}
are mutually disjoint and each of them contains exactly one zero of  $f$.
\end{cor}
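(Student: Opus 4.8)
The plan is to deduce this statement directly from Corollary~\ref{cor:localization} by making one careful choice of the radius factor $c$. Write $E = E_f(x) = \|W_f(x)/d(x)\|_p$, so that the hypothesis gives $E \le R \le 1/(2(a+1))$. In particular the bound on $E_f(x)$ required in Corollary~\ref{cor:localization} holds. In the exceptional case $n = 2$, $p = \infty$ (where $a = 1$ and $1/(2(a+1)) = 1/4$), the assumed strictness in \eqref{eq:localization-PHI} gives $E < R \le 1/4$, so that $E < 1/(2(a+1))$; hence the strictness demanded by Corollary~\ref{cor:localization} in that same exceptional case is automatically met. Thus Corollary~\ref{cor:localization} is applicable in every case, and it remains only to exhibit a suitable $c$ in its admissible interval.

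Since the disks of the present statement have radius $|W_i(x)|/(1-(a+1)R)$, I would take $c = 1/(1-(a+1)R) = \gamma(R)$, and then check that $c$ lies in the interval $[\gamma(E),\beta(E)]$ supplied by Corollary~\ref{cor:localization}. For the lower bound, observe that $\gamma$ is increasing on $[0,1/(a+1))$, because its denominator $1-(a+1)t$ is positive and decreasing there; since $E \le R$, this gives $\gamma(E) \le \gamma(R) = c$. For the upper bound, the constraint $R \le 1/(2(a+1))$ forces $(a+1)R \le 1/2$, hence $1-(a+1)R \ge 1/2$ and therefore $c = \gamma(R) \le 2$; on the other hand, from $a \ge 1$ and $E \ge 0$ we have $(a-1)E \ge 0$, so $\beta(E) = 2/(1-(a-1)E) \ge 2$. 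Combining the two estimates yields $\gamma(E) \le c \le 2 \le \beta(E)$, i.e. $c \in [\gamma(E),\beta(E)]$.

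With this value of $c$, Corollary~\ref{cor:localization} gives at once that $f$ has only simple zeros in $\Kset$ and that the disks $\{z \in \Kset : |z - x_i| \le c\,|W_i(x)|\}$, which coincide with $\{z \in \Kset : |z-x_i| \le |W_i(x)|/(1-(a+1)R)\}$, are mutually disjoint and each contains exactly one zero of $f$. This is precisely the desired conclusion, so the proof would be complete.

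I do not expect a serious obstacle: the whole argument reduces to a monotonicity fact for $\gamma$ together with the two-line membership check $\gamma(E) \le c \le \beta(E)$. The one point that genuinely deserves care is the upper bound $c \le \beta(E)$, since this is exactly where the numerical threshold $1/(2(a+1))$ is forced. Intuitively, enlarging the radius factor from $\gamma(E)$ to $\gamma(R)$ only helps each disk capture its zero but threatens disjointness; the hypothesis $R \le 1/(2(a+1))$ is precisely what keeps $\gamma(R) \le 2 \le \beta(E)$ and so preserves disjointness. The only other thing to track is the edge-case bookkeeping for $n=2$, $p=\infty$, which is handled by transporting the assumed strict inequality into the strictness hypothesis of Corollary~\ref{cor:localization}.
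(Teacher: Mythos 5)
Your proposal is correct and takes essentially the same route as the paper, whose entire proof is to apply Corollary~\ref{cor:localization} with $c = \gamma(R)$. You simply spell out the verification the paper leaves implicit, namely $\gamma(E_f(x)) \le \gamma(R) \le 2 \le \beta(E_f(x))$ together with the edge-case bookkeeping for $n = 2$, $p = \infty$.
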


\begin{proof}
It follows from Corollary~\ref{cor:localization} with ${c = \gamma(R)}$.
\end{proof}

%%%%%%%%%%%%%%%%%%%%%%%%%%%%%%%%%%%%%%%%%%%%%%%%%%%%%%%%%%%%%%%%%%%%%%%%%%%%%%%%%%%%%%%%%%%
%
%      Relationships between initial conditions of the first type and the second type
%
%%%%%%%%%%%%%%%%%%%%%%%%%%%%%%%%%%%%%%%%%%%%%%%%%%%%%%%%%%%%%%%%%%%%%%%%%%%%%%%%%%%%%%%%%%%

%Section 4
\section{Relationships between initial conditions of the first type and the second type}

In this section, we show how to convert each local convergence theorem of the first type into a local convergence theorem of the second type.

%Proposition 4.1
\begin{prop} \label{prop:inequality}
Let $u,v \in \Kset^n$ be two vectors with distinct components and let $1 \le p \le \infty$.
Then
\begin{equation} \label{eq:inequality}
\left\| \frac{u - v}{d(u)} \right\|_p  \ge 
\left( 1 - b \left\| \frac{u - v}{d(u)} \right\|_p \right) \left\| \frac{u - v}{d(v)} \right\|_p \, ,
\end{equation}
where ${b = 2^{1/q}}$.
\end{prop}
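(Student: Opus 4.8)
The plan is to reduce the claim to a pointwise estimate indexed by $i$ and then pass to $\ell^p$-norms, with a single two-term Hölder inequality supplying the constant $b = 2^{1/q}$. Throughout I write $w_i = |u_i - v_i|$, $s_i = w_i / d_i(u)$ and $t_i = w_i / d_i(v)$, so that $\phi := \| (u - v)/d(u) \|_p = \| s \|_p$ and $\psi := \| (u - v)/d(v) \|_p = \| t \|_p$; the goal \eqref{eq:inequality} is then equivalent to $\psi(1 - b\phi) \le \phi$, which I will obtain in the form $\psi \le (1 + b\psi)\,\phi$.

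First I would fix an index $i$ and choose $j = j(i) \ne i$ realizing the nearest-neighbour distance of $v$ at $i$, that is, $d_i(v) = |v_i - v_j|$. The triangle inequality gives $|u_i - u_j| \le |v_i - v_j| + w_i + w_j$, and since $d_i(u) \le |u_i - u_j|$ by the definition of $d_i(u)$, this yields the comparison $d_i(u) \le d_i(v) + w_i + w_j$. Multiplying by $s_i$, using $s_i\, d_i(u) = w_i$, and dividing by $d_i(v) > 0$ (legitimate because $v$ has distinct components) produces $t_i \le s_i\bigl(1 + t_i + w_j/d_i(v)\bigr)$.

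The next step is to absorb the cross term $w_j / d_i(v)$. Here I would use that, $j$ being a nearest neighbour of $v_i$, one also has $d_j(v) \le |v_j - v_i| = d_i(v)$, whence $w_j / d_i(v) \le w_j / d_j(v) = t_j$ and therefore $t_i \le s_i(1 + t_i + t_j)$ for every $i$. Now comes the point I expect to be the crux: rather than summing $t_{j(i)}$ over $i$ --- where the possible non-injectivity of $i \mapsto j(i)$ would destroy the constant --- I keep $i$ fixed and apply Hölder's inequality to the two-term sum, exactly as in the proof of Proposition~\ref{prop:disks-disjoint}, getting $t_i + t_j \le 2^{1/q}(t_i^p + t_j^p)^{1/p} \le b\,\| t \|_p = b\psi$. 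This collapses the bound to $t_i \le s_i(1 + b\psi)$.

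Finally I would take the $\ell^p$-norm in $i$ of the pointwise inequality $t_i \le (1 + b\psi)\,s_i$; since $1 + b\psi$ is a scalar this gives $\psi = \| t \|_p \le (1 + b\psi)\,\| s \|_p = (1 + b\psi)\,\phi$, i.e. $\psi(1 - b\phi) \le \phi$, which is the assertion. I expect the only genuine difficulty to be the handling of the cross term $w_{j(i)}/d_i(v)$: the auxiliary inequality $d_{j(i)}(v) \le d_i(v)$ is what makes it expressible through $t_{j(i)}$, and using the two-term Hölder estimate at a fixed $i$ --- instead of summing --- is precisely what recovers the sharp factor $b = 2^{1/q}$. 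Degenerate situations need no separate treatment: if $u_i = v_i$ the $i$-th estimate is trivial, and if $b\phi \ge 1$ the rearranged inequality $\psi(1 - b\phi) \le \phi$ holds vacuously.
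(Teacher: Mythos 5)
Your proof is correct, but it takes a genuinely different route from the paper's. The paper disposes of the proposition in two lines by citing an external lemma (Proposition~5.2 of \cite{Pro15}), which supplies the pointwise comparison $d_i(v) \ge \left(1 - b \left\| (u-v)/d(u) \right\|_p\right) d_i(u)$, with the constant expressed through $\phi = \|(u-v)/d(u)\|_p$; dividing by $d_i(u)\,d_i(v)$, multiplying by $|u_i - v_i|$ and taking the $p$-norm then gives \eqref{eq:inequality} immediately. You instead prove everything from scratch and, in effect, establish the mirror comparison $d_i(u) \le (1 + b\psi)\, d_i(v)$, with the constant expressed through $\psi = \|(u-v)/d(v)\|_p$: the choice of $j$ realizing $d_i(v)$, the triangle-inequality bound $d_i(u) \le d_i(v) + w_i + w_j$, the observation $d_j(v) \le |v_j - v_i| = d_i(v)$ that converts the cross term into $t_j$, and the fixed-$i$ two-term H\"older estimate $t_i + t_j \le b\psi$ (the same device as in Proposition~\ref{prop:disks-disjoint}) are all sound, and the endpoint cases $p = 1$ and $p = \infty$ as well as the degenerate regime $b\phi \ge 1$ are handled correctly by the final algebraic rearrangement of $\psi \le (1 + b\psi)\phi$ into \eqref{eq:inequality}. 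What each approach buys: the paper's argument is shorter but leans on the cited lemma; yours is self-contained within this paper, and as a by-product yields the pointwise estimate $d_i(v) \ge d_i(u)/(1 + b\psi)$, a genuinely different intermediate inequality from the $(1 - b\phi)$-bound of \cite{Pro15}, even though the two collapse to the same norm inequality after rearrangement.
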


\begin{proof}
According to Proposition~5.2 of \cite{Pro15}, we have
\[
d_i(v) \ge \left( 1 - b \left\| \frac{u - v}{d(u)} \right\|_p \right) d_i(u), \qquad i = 1,\ldots,n,
\]
which can be written in the form
\[
\frac{1}{d_i(u)} \ge \left( 1 - b \left\| \frac{u - v}{d(u)} \right\|_p \right) \frac{1}{d_i(v)} \, . 
\]
Multiplying both sides of this inequality by ${|u_i - v_i|}$ and taking the $p$-norm, we get \eqref{eq:inequality}.
\end{proof}

%Theorem 4.2
\begin{thm} \label{thm:local-local}
Let ${\xi \in \Kset^n}$, where  ${n \ge 2}$. 
Suppose ${x \in \Kset^n}$ is a vector with distinct components such that
\begin{equation}  \label{eq:local1}
\left\| \frac{x - \xi}{d(x)} \right\|_p  \le \frac{R}{1 + b R}
\end{equation}
for some $1 \le p \le \infty$ and $R > 0$, where $b = 2^{1/q}$.
Then $\xi$ has pairwise distinct components and
\begin{equation}  \label{eq:local2}
\left\| \frac{x - \xi}{d(\xi)} \right\|_p  \le R.
\end{equation}
Besides, if the inequality \eqref{eq:local1} is strict, then \eqref{eq:local2} is strict too.
\end{thm}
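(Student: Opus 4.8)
The plan is to apply Proposition~\ref{prop:inequality} with $u = x$ and $v = \xi$, after first securing the distinctness of the components of $\xi$, and then to convert the resulting estimate into \eqref{eq:local2} by an elementary monotonicity argument. Throughout I would abbreviate $E = \|(x-\xi)/d(x)\|_p$, so that the hypothesis \eqref{eq:local1} reads $E \le R/(1 + bR)$. Since $R > 0$, this at once gives $bE \le bR/(1+bR) < 1$, a fact I will use repeatedly.

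The first step is to establish that $\xi$ has pairwise distinct components, and this is the point I expect to be the main obstacle: the distinctness of $\xi$ is part of the conclusion rather than a hypothesis, so Proposition~\ref{prop:inequality} cannot be applied to $\xi$ until a lower bound on $d_i(\xi)$ has been obtained independently. The natural tool is the component-wise estimate $d_i(\xi) \ge (1 - bE)\, d_i(x)$ for $i = 1,\ldots,n$, which is exactly the inequality from Proposition~5.2 of \cite{Pro15} invoked in the proof of Proposition~\ref{prop:inequality} and which requires only that $x$ have distinct components. Because $x$ has distinct components we have $d_i(x) > 0$, and since $1 - bE > 0$ we conclude $d_i(\xi) > 0$ for every $i$; hence no two components of $\xi$ coincide, and $d(\xi)$ has no zero entry, so that the left-hand side of \eqref{eq:local2} is even well-defined.

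With distinctness in hand, Proposition~\ref{prop:inequality} applies verbatim to $u = x$, $v = \xi$ and yields
\[
E \;\ge\; (1 - bE)\,\left\| \frac{x - \xi}{d(\xi)} \right\|_p .
\]
Writing $F = \|(x-\xi)/d(\xi)\|_p$ and dividing by $1 - bE > 0$, this becomes $F \le E/(1 - bE)$. It then remains to show $E/(1 - bE) \le R$, for which I would use that the function $\varphi(t) = t/(1 - bt)$ is strictly increasing on $[0, 1/b)$ (its derivative is $(1-bt)^{-2} > 0$), together with the direct computation $\varphi\!\left(R/(1+bR)\right) = R$. Since $0 \le E \le R/(1+bR) < 1/b$, monotonicity gives $F \le \varphi(E) \le \varphi\!\left(R/(1+bR)\right) = R$, which is \eqref{eq:local2}. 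Finally, if \eqref{eq:local1} is strict, i.e. $E < R/(1+bR)$, then strict monotonicity of $\varphi$ forces $\varphi(E) < R$ and hence $F < R$, giving the strict form of \eqref{eq:local2}.
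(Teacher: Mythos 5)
Your proposal is correct and follows essentially the same route as the paper's proof: distinctness of $\xi$ via Proposition~5.2 of \cite{Pro15}, then Proposition~\ref{prop:inequality} with $u = x$, $v = \xi$, combined with \eqref{eq:local1}. You merely spell out explicitly (via the monotonicity of $t \mapsto t/(1-bt)$ and the identity $\varphi\left(R/(1+bR)\right) = R$) the final step that the paper compresses into ``taking into account \eqref{eq:local1}'', including the strictness claim, which is a faithful elaboration rather than a different argument.
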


\begin{proof}
From \eqref{eq:local1} and Proposition~5.2 of \cite{Pro15}, we conclude that $\xi$ has pairwise distinct components.
Applying Proposition~\ref{prop:inequality} with ${u = x}$ and ${v = \xi}$, and taking into account 
\eqref{eq:local1}, we obtain \eqref{eq:local2}.
\end{proof}

Note that using Theorem~\ref{thm:local-local} we can transform a convergence theorem of the first type into a convergence theorem of the second type. In other words, we can convert every local convergence theorem with initial conditions of the type 
\eqref{eq:local2} into a local convergence theorem with initial condition of the form \eqref{eq:local1}.

%Remark 4.3
\begin{rem} \label{rem:AB}
Let ${R = (\sqrt[n-1]{2} - 1) / (2 \sqrt[n-1]{2} - 1)}$. 
Applying Theorem~\ref{thm:local-local} to Theorem A we immediately get Theorem B.
\end{rem}

%%%%%%%%%%%%%%%%%%%%%%%%%%%%%%%%%%%%%%%%%%%%%%%%%%%%%%%%%%%%%%%%%%%%%%%%%%%%%%%%%%%%%%%%%%%
%
%      Relationships between initial conditions of the second type and the third type
%
%%%%%%%%%%%%%%%%%%%%%%%%%%%%%%%%%%%%%%%%%%%%%%%%%%%%%%%%%%%%%%%%%%%%%%%%%%%%%%%%%%%%%%%%%%%

%Section 5
\section{Relationships between initial conditions of the second type and the third type}

In this section we show how to obtain a semilocal convergence theorem for simultaneous methods from every local convergence theorems of the second type. More precisely, we give two theorems for converting any local theorem of the second type into a theorem with computationally verifiable initial conditions.

%Theorem 5.1
\begin{thm} \label{thm:localization-root-vector}
Let ${f \in \Kset[z]}$ be a polynomial of degree ${n \ge 2}$.
Suppose there exists a vector ${x \in \Kset^n}$ with distinct components such that
\begin{equation} \label{eq:localization-root-vector}
E_f(x) = \left\| \frac{W_f(x)}{d(x)} \right\|_p  \le \frac{1}{(1 + \sqrt a )^2} 
\end{equation}
for some ${1 \le p \le \infty}$, where ${a = (n - 1)^{1/q}}$. In the case ${n = 2}$ and ${p = \infty}$ we assume that the inequality in 
\eqref{eq:localization-root-vector} is strict. 
Then $f$ has only simple zeros and there exists a root-vector 
${\xi \in \Kset^n}$ of $f$ such that
\begin{equation} \label{eq:root-vector-existence}
\left\| \frac{x - \xi}{d(x)} \right\|_p  \le \frac{2 E_f(x)}{1 - (a - 1) E_f(x) + \sqrt{(1 - (a - 1) E_f(x))^2  - 4 E_f(x)}} \, .
\end{equation}
\end{thm}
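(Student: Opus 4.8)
The plan is to deduce this theorem directly from the localization machinery already established, specifically Theorem~\ref{thm:localization}, and then to extract from the disjoint localizing disks a genuine root-vector satisfying the stated bound. First I would invoke Theorem~\ref{thm:localization} under the hypothesis \eqref{eq:localization-root-vector}, which is literally condition \eqref{eq:localization1}, including the same strictness caveat in the borderline case ${n = 2}$, ${p = \infty}$. This immediately gives that $f$ has only simple zeros and that for every ${c \in [\alpha(E_f(x)),\beta(E_f(x))]}$ the disks \eqref{eq:big-disks-disjoint} are mutually disjoint with each containing exactly one zero of $f$. The natural choice is the left endpoint ${c = \alpha(E_f(x))}$, since $\alpha(t)$ is precisely the factor appearing on the right-hand side of \eqref{eq:root-vector-existence}: comparing \eqref{eq:alpha} with the claimed bound, one sees that the right-hand side of \eqref{eq:root-vector-existence} is exactly ${\alpha(E_f(x)) \, E_f(x)}$.

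Next I would assemble the root-vector $\xi$ componentwise. Because the $n$ disks $D_i$ are mutually disjoint and each contains exactly one simple zero of $f$, there is a well-defined bijection ${i \mapsto \xi_i}$ sending each index to the unique zero lying in $D_i$; collecting these gives a vector ${\xi \in \Kset^n}$ whose components are precisely the $n$ zeros of $f$ (each counted once, by simplicity), so $\xi$ is a root-vector. Membership ${\xi_i \in D_i}$ with ${c = \alpha(E_f(x))}$ reads
\[
|x_i - \xi_i| \le \alpha(E_f(x)) \, |W_i(x)|, \qquad i = 1,\ldots,n.
\]
I would then divide by $d_i(x)$ and take the $p$-norm. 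Since $\alpha(E_f(x))$ is a constant independent of $i$, it factors out, yielding
\[
\left\| \frac{x - \xi}{d(x)} \right\|_p \le \alpha(E_f(x)) \left\| \frac{W_f(x)}{d(x)} \right\|_p = \alpha(E_f(x)) \, E_f(x),
\]
which is exactly \eqref{eq:root-vector-existence} once $\alpha$ is written out via \eqref{eq:alpha}.

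The routine verification is the algebraic identification of $\alpha(E_f(x)) \, E_f(x)$ with the displayed fraction, which is immediate from the definition of $\alpha$. The one genuine point requiring care is the well-definedness of $\xi$ as a root-vector: I must confirm that the disjointness of the $n$ disks, together with ``exactly one zero in each,'' forces the $n$ zeros so obtained to be \emph{all} the zeros of $f$ and pairwise distinct, so that ${f(z) = a_0 \prod_i (z - \xi_i)}$ holds with no zero omitted or repeated. This follows from a counting argument: $f$ has degree $n$, hence at most $n$ zeros counted with multiplicity; the disks contribute $n$ zeros counted with multiplicity and are disjoint, so these account for all zeros and each has multiplicity one, confirming $f$ has simple zeros and that the collected $\xi_i$ are exactly the distinct roots. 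The main obstacle is therefore not computational but bookkeeping — ensuring the index-to-zero assignment is a bijection onto the full zero set — and this is handled cleanly by the disjointness and the multiplicity statement already furnished by Proposition~\ref{thm:Braess-Hadeler-1973} through Theorem~\ref{thm:localization}.
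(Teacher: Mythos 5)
Your proposal is correct and takes essentially the same route as the paper's proof: invoke Theorem~\ref{thm:localization} with the endpoint choice ${c = \alpha(E_f(x))}$, read off ${|x_i - \xi_i| \le \alpha(E_f(x))\,|W_i(x)|}$ from the disjoint disks, then divide by $d_i(x)$ and take the $p$-norm, noting that ${\alpha(E_f(x))\, E_f(x)}$ is exactly the right-hand side of \eqref{eq:root-vector-existence}. Your additional bookkeeping verifying that the disk-to-zero assignment produces a genuine root-vector merely makes explicit a step the paper leaves implicit, and it is handled correctly.
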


\begin{proof}
It follows from Theorem~\ref{thm:localization} that $f$ has only simple zeros and the disks
\[
D_i  = \{ z \in\Kset : |z - x_i| \le \alpha(E_f(x)) \, |W_i(x)| \}, \qquad i = 1,2,\ldots,n, 
\]
are mutually disjoint and each of them contains exactly one zero of $f$.
This means that there is a root-vector ${\xi \in \Kset^n}$ of $f$ such that
\[
	|x_i - \xi_i| \le \alpha(E_f(x)) \, |W_i(x)|.
\]
Dividing both sides of this inequality by ${d_i(x)}$ and taking the $p$-norm, we get \eqref{eq:root-vector-existence}.
\end{proof}

%Theorem 5.2
\begin{thm} \label{thm:second-local-semilocal}
Let ${f \in \Kset[z]}$ be a polynomial of degree ${n \ge 2}$.  
Suppose there exists a vector ${x \in \Kset^n}$ with distinct components such that
\begin{equation}  \label{eq:second-local-semilocal-1}
\left\| \frac{W_f(x)}{d(x)} \right\|_p  \le \frac{R (1 - R)}{1 + (a - 1) R} 
\end{equation}
for some ${1 \le p \le \infty}$ and ${0 \le R \le 1/(1 + \sqrt a)}$, where ${a = (n - 1)^{1/q}}$.
In the case ${n = 2}$, ${p = \infty}$ and ${R = 1/(1 + \sqrt a)}$, we assume that inequality \eqref{eq:second-local-semilocal-1} is strict. 
Then $f$ has only simple zeros in $\Kset$ 
and there exists a root-vector ${\xi \in \Kset^n}$ of $f$ such that
\begin{equation}  \label{eq:second-local-semilocal-2}
\left\| \frac{x - \xi}{d(x)} \right\|_p \le R.
\end{equation} 
If inequality \eqref{eq:second-local-semilocal-1} is strict, then \eqref{eq:second-local-semilocal-2} is strict too.
\end{thm}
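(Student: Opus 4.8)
The plan is to deduce this theorem directly from Theorem~\ref{thm:localization-root-vector}, which already produces a root-vector together with the explicit bound $\|(x-\xi)/d(x)\|_p \le \phi(E_f(x))$, where I abbreviate
\[
\phi(t) = \frac{2t}{1 - (a-1)t + \sqrt{(1-(a-1)t)^2 - 4t}} \, .
\]
Thus the whole argument reduces to two points: first, that the hypothesis \eqref{eq:second-local-semilocal-1} forces $E_f(x)$ into the range $[0, 1/(1+\sqrt a)^2]$ on which Theorem~\ref{thm:localization-root-vector} applies (and which also yields simplicity of the zeros); and second, that on this range $\phi(E_f(x)) \le R$.

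The key analytic step is to understand the monotone inverse of $\phi$. First I would rationalize $\phi$ by multiplying numerator and denominator by the conjugate $1 - (a-1)t - \sqrt{(1-(a-1)t)^2 - 4t}$, which collapses the denominator to $4t$ and yields the simpler closed form
\[
\phi(t) = \frac{1 - (a-1)t - \sqrt{(1-(a-1)t)^2 - 4t}}{2} \, .
\]
From the original product $\phi(t) = \alpha(t)\,t$ it is clear that $\phi$ is nonnegative and increasing on $[0, 1/(1+\sqrt a)^2]$. Next I would solve $\phi(t) = R$: isolating the square root as $\sqrt{(1-(a-1)t)^2 - 4t} = 1 - (a-1)t - 2R$ and squaring makes the $(1-(a-1)t)^2$ terms cancel, leaving a linear equation in $t$ whose unique solution is exactly $t = R(1-R)/(1+(a-1)R)$, the right-hand side of \eqref{eq:second-local-semilocal-1}.

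The main obstacle is the legitimacy of this squaring step, which requires $1 - (a-1)t - 2R \ge 0$ at the candidate value. A short computation gives, at $t = R(1-R)/(1+(a-1)R)$, the identity $1-(a-1)t - 2R = (1 - 2R - (a-1)R^2)/(1+(a-1)R)$, and the numerator $1 - 2R - (a-1)R^2$ is nonnegative precisely when $R \le 1/(1+\sqrt a)$, vanishing at the endpoint. This is exactly where the restriction $0 \le R \le 1/(1+\sqrt a)$ is used; it further shows that $g(R) := R(1-R)/(1+(a-1)R)$, being the inverse of the increasing $\phi$, increases up to the value $1/(1+\sqrt a)^2$ at $R = 1/(1+\sqrt a)$, so that \eqref{eq:second-local-semilocal-1} guarantees $E_f(x) \le g(R) \le 1/(1+\sqrt a)^2$.

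To finish, since $\phi$ is increasing and $\phi(g(R)) = R$, the hypothesis $E_f(x) \le g(R)$ yields $\phi(E_f(x)) \le R$, and Theorem~\ref{thm:localization-root-vector} supplies a root-vector $\xi$ with $\|(x-\xi)/d(x)\|_p \le \phi(E_f(x)) \le R$, which is \eqref{eq:second-local-semilocal-2}. The edge case $n=2$, $p=\infty$, $R = 1/(1+\sqrt a)$ corresponds to $g(R) = 1/(1+\sqrt a)^2$, precisely where Theorem~\ref{thm:localization-root-vector} itself needs a strict hypothesis; the assumed strictness of \eqref{eq:second-local-semilocal-1} gives $E_f(x) < 1/(1+\sqrt a)^2$, supplying it. Finally, the strict monotonicity of $\phi$ propagates strictness from \eqref{eq:second-local-semilocal-1} to \eqref{eq:second-local-semilocal-2}.
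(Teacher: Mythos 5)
Your proposal is correct and follows essentially the same route as the paper's proof: both reduce the theorem to Theorem~\ref{thm:localization-root-vector} by observing that the right-hand side of \eqref{eq:second-local-semilocal-1} is $g(R) = R(1-R)/(1+(a-1)R)$, the inverse on $[0,1/(1+\sqrt a)]$ of the bound function $h$ appearing in \eqref{eq:root-vector-existence}, so that $E_f(x) \le g(R) \le 1/(1+\sqrt a)^2$ and $h(E_f(x)) \le h(g(R)) = R$. The only difference is one of detail: you explicitly verify the inverse relationship (rationalizing $h$, checking the legitimacy of the squaring step via $1 - 2R - (a-1)R^2 \ge 0$, which is exactly where $R \le 1/(1+\sqrt a)$ enters) and you spell out the $n=2$, $p=\infty$ edge case and the propagation of strictness, all of which the paper's proof asserts without computation.
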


\begin{proof}
Let ${\tau = 1 / (1 + \sqrt{a})}$ and ${\mu = 1 / (1 + \sqrt{a})^2}$. 
Consider the real  function ${g \colon [0,\tau] \to [0,\mu]}$ defined by
\[
g(t) = \frac{t (1 - t)}{1 + (a - 1) t} \, . 
\]
Note that $g$ is strictly increasing on ${[0,\tau]}$. The inverse function of $g$ is the function ${h \colon [0,\mu] \to [0,\tau]}$ defined by
\[
h(t) = \frac{2 t}{1 - (a - 1) t + \sqrt{(1 - (a - 1) t)^2  - 4 t}}. 
\]
It follows from \eqref{eq:second-local-semilocal-1} and ${R \in [0,\tau]}$ that 
\[
E_f(x) = \left\| \frac{W_f(x)}{d(x)} \right\|_p  \le g(R) \le \mu = \frac{1}{(1 + \sqrt a )^2} \, .
\]
By Theorem~\ref{thm:localization-root-vector} we conclude that $f$ has only simple zeros and there exists 
a root-vector ${\xi \in \Kset^n}$ of $f$ such that
\[
\left\| \frac{x - \xi}{d(x)} \right\|_p  \le h(E_f(x)) \le h(g(R)) = R
\]
which proves \eqref{eq:second-local-semilocal-2}.
\end{proof}

%Corollary 5.3
\begin{cor} \label{cor:second-local-semilocal}
Let ${f \in \Kset[z]}$ be a polynomial of degree ${n \ge 2}$.  
Suppose there exists a vector ${x \in \Kset^n}$ with distinct components such that
\begin{equation}  \label{eq:second-local-semilocal-corollary-1}
\left\| \frac{W_f(x)}{d(x)} \right\|_p  \le \frac{R}{1 + (a + 1) R}
\end{equation}
for some ${1 \le p \le \infty}$ and ${0 < R \le 1/(1 + a)}$, where ${a = (n - 1)^{1/q}}$.
In the case ${n = 2}$, ${p = \infty}$, ${R = 1/(1 + a)}$, we assume that inequality \eqref{eq:second-local-semilocal-corollary-1} is strict. 
Then $f$ has only simple zeros in $\Kset$ and there exists a root-vector ${\xi \in \Kset^n}$ of $f$ which satisfies 
\eqref{eq:second-local-semilocal-2}. 
If inequality \eqref{eq:second-local-semilocal-corollary-1} is strict, then \eqref{eq:second-local-semilocal-2} is strict too.
\end{cor}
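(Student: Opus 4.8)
The plan is to obtain Corollary~\ref{cor:second-local-semilocal} directly from Theorem~\ref{thm:second-local-semilocal}, using the \emph{same} value of $R$ throughout. Since the two statements have identical conclusions \eqref{eq:second-local-semilocal-2}, it suffices to show that, for the admissible range of $R$, the hypothesis \eqref{eq:second-local-semilocal-corollary-1} of the corollary is no weaker than the hypothesis \eqref{eq:second-local-semilocal-1} of the theorem; the conclusion then follows by a single invocation of the theorem.

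First I would check that the range is admissible. Because $a = (n-1)^{1/q} \ge 1$ for $n \ge 2$, we have $1 + \sqrt{a} \le 1 + a$, so the restriction $0 < R \le 1/(1+a)$ of the corollary entails the restriction $0 \le R \le 1/(1+\sqrt{a})$ required by the theorem.

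The crux is the elementary inequality
\[
\frac{R}{1 + (a+1)R} \le \frac{R(1-R)}{1 + (a-1)R}, \qquad 0 < R \le \frac{1}{1+a}.
\]
Dividing by $R > 0$ and clearing the (positive) denominators, this reduces to $1 + (a-1)R \le (1-R)(1+(a+1)R)$, i.e. to $(a+1)R^2 \le R$, which is precisely the assumed bound $R \le 1/(a+1)$. Consequently the right-hand side of \eqref{eq:second-local-semilocal-corollary-1} does not exceed that of \eqref{eq:second-local-semilocal-1}, so the hypothesis of Theorem~\ref{thm:second-local-semilocal} holds with the same $R$, and that theorem produces a root-vector $\xi$ satisfying \eqref{eq:second-local-semilocal-2}.

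Finally I would settle the strictness bookkeeping, which I expect to be the only delicate point. The displayed inequality is an equality exactly at $R = 1/(1+a)$, and in the special case $n = 2$, $p = \infty$ one has $a = 1$ and $1/(1+a) = 1/(1+\sqrt{a}) = 1/2$; thus the corollary's strictness assumption (at $R = 1/(1+a)$) coincides with the theorem's strictness assumption (at $R = 1/(1+\sqrt{a})$) and, through the boundary equality, transfers the strict hypothesis across. In all remaining cases a strict inequality in \eqref{eq:second-local-semilocal-corollary-1} forces a strict inequality in \eqref{eq:second-local-semilocal-1}, because the former right-hand side is bounded above by the latter; hence Theorem~\ref{thm:second-local-semilocal} yields the strict form of \eqref{eq:second-local-semilocal-2}. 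Apart from this short verification, there is no genuine obstacle.
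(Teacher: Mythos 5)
Your proof is correct and takes essentially the same approach as the paper: the paper's proof consists of the single remark that the hypotheses of Corollary~\ref{cor:second-local-semilocal} imply those of Theorem~\ref{thm:second-local-semilocal}, which is exactly the reduction you carry out. Your verification of the elementary inequality ${R/(1+(a+1)R) \le R(1-R)/(1+(a-1)R)}$ for ${0 < R \le 1/(1+a)}$, and of the strictness transfer in the boundary case ${n=2}$, ${p=\infty}$, ${a=1}$ (where ${1/(1+a) = 1/(1+\sqrt{a})}$ and the two bounds coincide), simply supplies the details the paper leaves as ``easy to show.''
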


\begin{proof}
It is easy to show that if $x$, $p$ and $R$ satisfy the assumptions of Corollary~\ref{cor:second-local-semilocal}, then they satisfy the assumptions of Theorem~\ref{thm:second-local-semilocal}.
\end{proof}

Note that using Theorem~\ref{thm:localization-root-vector}, Theorem~\ref{thm:second-local-semilocal} or 
Corollary~\ref{cor:second-local-semilocal}, we can transform a convergence theorem of the second type into a convergence theorem of the third type. For example, using Theorem~\ref{thm:second-local-semilocal} we can convert every local convergence theorem with initial conditions of the form \eqref{eq:second-local-semilocal-2} into a semilocal convergence theorem with initial condition of the form  
\eqref{eq:second-local-semilocal-1} provided that ${0 < R \le 1/(1 + \sqrt a)}$.

%Remark 5.4
\begin{rem} \label{rem:BC}
For simplicity, we replace the right-hand side of \eqref{eq:Wang-Zhao-initial-conditions} by a smaller one 
${R \delta(x^0)}$, where ${R = 1 / (2 n + 2)}$. 
Applying Corollary~\ref{cor:second-local-semilocal} to Theorem~B, we get Theorem~C with 
an initial condition ${\|W(x^0)\|_\infty \le \delta(x^0) /(3 n + 3)}$.
\end{rem}

%%%%%%%%%%%%%%%%%%%%%%%%%%%%%%%%%%%%%%%%%%%%%%%%%%%%%%%%%%%%%%%%%%%%%%%%%%%%%%%%%%%%%%%%%%%
%
%      Relationships between initial conditions of the first type and the third type
%
%%%%%%%%%%%%%%%%%%%%%%%%%%%%%%%%%%%%%%%%%%%%%%%%%%%%%%%%%%%%%%%%%%%%%%%%%%%%%%%%%%%%%%%%%%%

%Section 6
\section{Relationships between initial conditions of the first type and the third type}

In this section, we obtain relationships between initial conditions of the first type and the third type.

%Theorem 6.1
\begin{thm} \label{thm:first-local-semilocal}
Let ${f \in \Kset[z]}$ be a polynomial of degree ${n \ge 2}$.  
Suppose there exists a vector ${x \in \Kset^n}$ with distinct components such that
\begin{equation}  \label{eq:first-local-semilocal-1}
\left\| \frac{W_f(x)}{d(x)} \right\|_p  \le \frac{R (1 + (b - 1) R)}{(1 + b R)(1 + (a + b - 1) R)}
\end{equation}
for some ${1 \le p \le \infty}$ and ${0 < R \le 1 / (1 - b + \sqrt a)}$, where ${a = (n - 1)^{1/q}}$ and ${b = 2^{1/q}}$. 
Then $f$ has only simple zeros in $\Kset$ and there exists a root-vector ${\xi \in \Kset^n}$ of $f$ such that
\begin{equation}  \label{eq:first-local-semilocal-2}
\left\| \frac{x - \xi}{d(\xi )} \right\|_p  \le R.
\end{equation}
If inequality \eqref{eq:first-local-semilocal-1} is strict, then \eqref{eq:first-local-semilocal-2} is strict too.
\end{thm}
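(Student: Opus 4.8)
The plan is to derive the statement by composing the two conversion results already proved. The hypothesis \eqref{eq:first-local-semilocal-1} is a bound of the third type, and the desired conclusion \eqref{eq:first-local-semilocal-2} is a bound of the first type; I would reach the latter from the former by routing through a bound of the second type. Concretely, I would first apply Theorem~\ref{thm:second-local-semilocal} to pass from \eqref{eq:first-local-semilocal-1} to an estimate of the form $\|(x-\xi)/d(x)\|_p \le R'$, and then apply Theorem~\ref{thm:local-local} to convert that estimate into \eqref{eq:first-local-semilocal-2}. The correct bridge value is
\[
R' = \frac{R}{1 + b R},
\]
which is exactly the right-hand side of \eqref{eq:local1}; since $t \mapsto t/(1 + b t)$ is increasing, $R' > 0$ whenever $R > 0$.

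The heart of the argument is an algebraic identity showing that the hypothesis \eqref{eq:first-local-semilocal-1} is precisely the threshold required by Theorem~\ref{thm:second-local-semilocal} for this choice of $R'$. Using $1 - R' = (1 + (b-1)R)/(1 + bR)$ and $1 + (a-1)R' = (1 + (a+b-1)R)/(1 + bR)$, one finds
\[
\frac{R'\,(1 - R')}{1 + (a-1)R'} = \frac{R\,(1 + (b-1)R)}{(1 + bR)(1 + (a+b-1)R)},
\]
so \eqref{eq:first-local-semilocal-1} says exactly that $E_f(x)$ is at most the admissible bound in \eqref{eq:second-local-semilocal-1} with $R$ replaced by $R'$. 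I would then check the accompanying range condition $0 < R' \le 1/(1+\sqrt a)$: cross-multiplying, $R' \le 1/(1+\sqrt a)$ is equivalent to $R(1 - b + \sqrt a) \le 1$, and since $b \le 1 + \sqrt a$ (with equality only when $n = 2$ and $p = \infty$, as already used in the proof of Theorem~\ref{thm:localization}), this is exactly the hypothesis $R \le 1/(1 - b + \sqrt a)$.

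With both facts verified, Theorem~\ref{thm:second-local-semilocal} yields a polynomial with only simple zeros and a root-vector $\xi$ satisfying $\|(x-\xi)/d(x)\|_p \le R'$; feeding this into Theorem~\ref{thm:local-local} gives $\|(x-\xi)/d(\xi)\|_p \le R$, which is \eqref{eq:first-local-semilocal-2}. Strictness propagates along the chain: a strict inequality in \eqref{eq:first-local-semilocal-1} yields a strict second-type bound from Theorem~\ref{thm:second-local-semilocal}, which Theorem~\ref{thm:local-local} carries to a strict first-type bound.

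The step I expect to require the most care is the boundary case $n = 2$, $p = \infty$, where $b = 1 + \sqrt a$ and the upper bound $1/(1 - b + \sqrt a)$ degenerates to $+\infty$. Here I would observe that $1 - b + \sqrt a = 0$ forces $R' = R/(1 + bR) < 1/(1 + \sqrt a)$ strictly for every finite $R > 0$, so the intermediate second-type radius never attains the threshold $1/(1 + \sqrt a)$. Thus the exceptional strictness assumption of Theorem~\ref{thm:second-local-semilocal}, which is only invoked at $R' = 1/(1 + \sqrt a)$, is automatically satisfied, which explains why the present statement needs no boundary caveat of its own.
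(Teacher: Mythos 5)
Your proposal is correct and follows essentially the same route as the paper: the paper likewise substitutes $\tilde{R} = R/(1+bR)$, observes via the same algebraic identity that \eqref{eq:first-local-semilocal-1} becomes \eqref{eq:second-local-semilocal-1} with $\tilde{R}$ in place of $R$, applies Theorem~\ref{thm:second-local-semilocal}, and then converts the second-type bound into \eqref{eq:first-local-semilocal-2} (the paper invokes Proposition~\ref{prop:inequality} directly, you invoke its packaged form Theorem~\ref{thm:local-local} --- the same step). Your explicit treatment of the boundary case $n=2$, $p=\infty$, where $1-b+\sqrt a=0$ and $\tilde{R}<1/(1+\sqrt a)$ holds strictly for all finite $R>0$, is in fact somewhat more careful than the paper's one-line remark to the same effect.
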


\begin{proof}
Let ${\tilde{R} = R / (1 + b R)}$. Then ${\tilde{R} \le 1 / (1 + \sqrt{a})}$ and condition 
\eqref{eq:first-local-semilocal-1} takes the form
\[
\left\| \frac{W_f(x)}{d(x)} \right\|_p  \le \frac{\tilde{R} (1 - \tilde{R})}{1 + (a - 1) \tilde{R}} \, .
\]
This inequality is strict if ${n =2}$ and ${p = \infty}$. It follows from Theorem~\ref{thm:second-local-semilocal} that $f$ has only simple zeros and there exists a root-vector 
${\xi \in \Kset^n}$ of $f$ such that
\[
\left\| \frac{x - \xi}{d(x)} \right\|_p \le \tilde{R} = \frac{R}{1 + b R} \, .
\]
Applying Proposition~\ref{prop:inequality} with ${u = \xi}$ and ${v = x}$ and taking into account 
the last inequality, we obtain \eqref{eq:first-local-semilocal-2}.
\end{proof}

%Corollary 6.2
\begin{cor} \label{cor:first-local-semilocal}
Let ${f \in \Kset[z]}$ be a polynomial of degree ${n \ge 2}$.  
Suppose there exists a vector ${x \in \Kset^n}$ with distinct components such that
\begin{equation}  \label{eq:first-local-semilocal2-infty-1}
\left\| \frac{W_f(x)}{d(x)} \right\|_p  \le \frac{R}{1 + (a + b + 1) R}
\end{equation}
for some $1 \le p \le \infty$ and ${0 < R \le 1 / (a - b + 1) }$, where ${a = (n - 1)^{1/q}}$ and ${b = 2^{1/q}}$. 
Then $f$ has only simple zeros in $\Kset$ and there exists a root-vector ${\xi \in \Kset^n}$ of $f$ which satisfies 
\eqref{eq:first-local-semilocal-2}.
Besides, if inequality \eqref{eq:first-local-semilocal2-infty-1} is strict, then \eqref{eq:first-local-semilocal-2} is strict too.
\end{cor}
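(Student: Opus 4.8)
The plan is to deduce this corollary from Theorem~\ref{thm:first-local-semilocal} by verifying that every triple $(x,p,R)$ satisfying the hypotheses of the corollary also satisfies the hypotheses of that theorem. Two things must be checked: first, that the bound on $\|W_f(x)/d(x)\|_p$ imposed in \eqref{eq:first-local-semilocal2-infty-1} is at least as strong as the one required in \eqref{eq:first-local-semilocal-1}; and second, that the admissible range $0 < R \le 1/(a-b+1)$ is contained in the range $0 < R \le 1/(1-b+\sqrt a)$ of the theorem. Once both are established, the conclusion \eqref{eq:first-local-semilocal-2} (and the simplicity of the zeros) is immediate, and the strictness assertion will follow from the corresponding assertion in Theorem~\ref{thm:first-local-semilocal}.

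For the first point I would compare the two bounds directly. Since $R>0$ I may cancel a common factor $R$ and reduce the required inequality (that the right-hand side of \eqref{eq:first-local-semilocal2-infty-1} does not exceed that of \eqref{eq:first-local-semilocal-1}) to
\[
\frac{1}{1+(a+b+1)R} \le \frac{1+(b-1)R}{(1+bR)(1+(a+b-1)R)} \, .
\]
Because $a\ge 1$ and $b\le 2$ give $a+b-1\ge 1>0$, all three denominators are positive, so I may cross-multiply and obtain the equivalent form $(1+bR)(1+(a+b-1)R)\le(1+(a+b+1)R)(1+(b-1)R)$. Expanding both products, the constant terms agree, and a short computation shows that the difference of the right and left members collapses to $R\bigl(1-(a-b+1)R\bigr)$. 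Hence the inequality holds exactly when $1-(a-b+1)R\ge 0$, i.e. when $R\le 1/(a-b+1)$, which is precisely the corollary's hypothesis; moreover it holds strictly whenever $R<1/(a-b+1)$. Consequently a strict inequality in \eqref{eq:first-local-semilocal2-infty-1} forces a strict inequality in \eqref{eq:first-local-semilocal-1}, from which the final strictness claim follows via the theorem.

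For the range inclusion I would only note that $1-b+\sqrt a \le a-b+1$ is equivalent to $\sqrt a \le a$, which holds since $a\ge 1$; thus $1/(a-b+1)\le 1/(1-b+\sqrt a)$ and the corollary's range lies inside the theorem's. The one place demanding care, and what I expect to be the main obstacle, is the degenerate case $n=2$, $p=\infty$, where $a=1$ and $b=2$, so that $a-b+1=0$ and $1-b+\sqrt a=0$: here both range bounds become vacuous and all $R>0$ are admissible. In that case the factor $1-(a-b+1)R$ reduces to $1$, so the difference of the two right-hand sides is simply $R>0$, and \eqref{eq:first-local-semilocal2-infty-1} always yields a strict form of \eqref{eq:first-local-semilocal-1}; thus Theorem~\ref{thm:first-local-semilocal} applies even without the extra strictness hypothesis that the other results in this section needed. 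With the bound comparison and the range inclusion both in hand, the corollary follows at once from Theorem~\ref{thm:first-local-semilocal}.
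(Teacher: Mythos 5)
Your proposal is correct and takes essentially the same approach as the paper: the paper's proof of this corollary is exactly the one-line reduction to Theorem~\ref{thm:first-local-semilocal} (``it is easy to show that the assumptions of the corollary imply those of the theorem''), and your verification supplies precisely the omitted details. Your algebra checks out --- the difference of the cross-multiplied bounds does collapse to $R\bigl(1-(a-b+1)R\bigr)$, the range inclusion follows from $\sqrt{a}\le a$, and your explicit treatment of the degenerate case $n=2$, $p=\infty$ (where $a-b+1=0$ and the corollary's bound is then strictly below the theorem's) is a careful touch the paper leaves implicit.
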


\begin{proof}
It is easy to show that if $x$, $p$ and $R$ satisfy the assumptions of Corollary~\ref{cor:first-local-semilocal}, then they satisfy the assumptions of Theorem~\ref{thm:first-local-semilocal}.
\end{proof}

Note that using Theorem~\ref{thm:first-local-semilocal} or Corollary~\ref{cor:first-local-semilocal}, we can transform a convergence theorem of the first type into a convergence theorem of the third type. For example, using Theorem~\ref{thm:first-local-semilocal}, 
we can convert every local convergence theorem with initial condition of the first type into a semilocal convergence theorem with initial condition of the third type, provided that ${0 < R \le 1 / (1 - b + \sqrt a)}$.  

%Remark 6.3
\begin{rem} \label{rem:AC}
Corollary~\ref{cor:first-local-semilocal} was stated without proof in \cite{PP14b}, where it was used for obtaining 
a semilocal convergence result for the two-step Weierstrass method. 
Another application of Corollary~\ref{cor:first-local-semilocal} can be found in \cite{PI14b}.
\end{rem}

%%%%%%%%%%%%%%%%%%%%%%%%%%%%%%%%%%%%%%%%%%%%%%
%
%             References
%
%%%%%%%%%%%%%%%%%%%%%%%%%%%%%%%%%%%%%%%%%%%%%%


\begin{thebibliography}{10}
\expandafter\ifx\csname url\endcsname\relax
  \def\url#1{\texttt{#1}}\fi
\expandafter\ifx\csname urlprefix\endcsname\relax\def\urlprefix{URL }\fi
\expandafter\ifx\csname href\endcsname\relax
  \def\href#1#2{#2} \def\path#1{#1}\fi

\bibitem{Doc62b}
K.~Dochev, 
Modified {N}ewton method for simultaneous approximation of all roots of a given algebraic equation, 
Phys. Math. J. Bulg. Acad. Sci. 5 (1962) 136--139 (Bulgarian).

\bibitem{WZ91}
D.~R. Wang, F.~G. Zhao, On the determination of the safe initial approximation for the {D}urand-{K}erner algorithm, 
J. Comput. Appl. Math. 38 (2014) 447--456.
\newblock \href {http://dx.doi.org/10.1016/0377-0427(91)90188-P}
{\path{doi:10.1016/0377-0427(91)90188-P}}.

\bibitem{PCT95}
M.~S. Petkovi{\'c}, C.~Carstensen, M.~Trajkovi{\'c}, Weierstrass formula and zero-finding methods, 
Numer. Math. 69 (1995) 353--372.
\newblock \href {http://dx.doi.org/10.1007/s002110050097}
{\path{doi:10.1007/s002110050097}}.

\bibitem{Pet08}
M.~Petkovi{\'c}, 
Point {E}stimation of {R}oot {F}inding {M}ethods, Vol. 1933 of Lecture Notes in Mathematics, Springer, Berlin, 2008.
\newblock \href {http://dx.doi.org/10.1007/978-3-540-77851-6}
{\path{doi:10.1007/978-3-540-77851-6}}.

\bibitem{Pro15}
P.~D. Proinov, General convergence theorems for iterative processes and applications to the {W}eierstrass root-finding method, 
arXiv: 1503.05243, 2015.

\bibitem{SAK94}
B.~Sendov, A.~Andreev, N.~Kjurkchiev, Numerical {S}olution of {P}olynomial {E}quations, in: Handbook of {N}umerical {A}nalysis, 
{V}ol.\ {III}, Elsevier, Amsterdam, 1994, pp. 625--778.
\newblock \href {http://dx.doi.org/10.1016/S1570-8659(05)80019-5}
{\path{doi:10.1016/S1570-8659(05)80019-5}}.

\bibitem{BH73}
D.~Braess, K.~P. Hadeler, 
Simultaneous inclusion of the zeros of a polynomial,
Numer. Math. 21 (1973) 161--165.

\bibitem{Lan94}
S.~Lang, 
Algebraic {N}umber {T}heory, 2nd Edition, 
Vol. 110 of Graduate Texts in Mathematics, Springer, New York, 1994.
\newblock \href {http://dx.doi.org/10.1007/978-1-4612-0853-2}
{\path{doi:10.1007/978-1-4612-0853-2}}.

\bibitem{PP14b}
P.~D. Proinov, M.~D. Petkova, 
Convergence of the two-point {W}eierstrass root-finding method, 
Japan J. Indust. Appl. Math. 31 (2014) 279--292.
\newblock \href {http://dx.doi.org/10.1007/s13160-014-0138-4}
{\path{doi:10.1007/s13160-014-0138-4}}.

\bibitem{PI14b}
P.~D. Proinov, S.~I. Ivanov, 
On the convergence of {H}alley's method for simultaneous computation of polynomial zeros, 
J. Numer. Math. (2015) in   press.

\end{thebibliography}
\end{document}